\documentclass[11pt]{article}
\usepackage[left=1in,right=1in,top=1in, bottom=1in]{geometry}
\usepackage{amssymb, amsmath,amsthm}
\usepackage{mathrsfs}
\usepackage{amscd}
\usepackage[active]{srcltx}
\usepackage{verbatim}
\usepackage{graphicx}
\usepackage{hyperref}

\usepackage[utf8]{inputenc}
\usepackage[T1]{fontenc}
\usepackage{lmodern}
\usepackage[backend=bibtex]{biblatex}
\usepackage{hyperref} 
\usepackage{todonotes}
\usepackage{enumerate}
\usepackage{mathtools}
\usepackage{bbm}

\mathtoolsset{showonlyrefs,showmanualtags}

\newcommand\dd{\mathrm{d}}

\newcommand{\Oo}{\mathcal{O}}
\newcommand{\Pp}{\mathcal{P}}

\newcommand{\Ss}{\mathcal{S}}
\newcommand{\Tt}{\mathcal{T}}

\newcommand{\EE}{\mathbb{E}}

\newcommand{\NN}{\mathbb{N}}

\newcommand{\PP}{\mathbb{P}}

\newcommand{\RR}{\mathbb{R}}
\newcommand{\bS}{\mathbb{S}}

\newcommand{\inp}[2]{\langle #1,#2 \rangle}

\newcommand{\Exp}{\mathrm{Exp}}

\newcommand{\Cov}{\mathrm{Cov}}

\newcommand{\One}{\mathbf{1}}

\DeclareUnicodeCharacter{2113}{$\ell$}

\newcommand{\ip}[2]{\langle #1,#2\rangle}

\renewcommand{\epsilon}{\varepsilon}

\theoremstyle{plain}
\newtheorem{theorem}{Theorem}[section]
\theoremstyle{remark}

\newtheorem{example}[theorem]{Example}
\theoremstyle{plain}
\newtheorem{corollary}[theorem]{Corollary}
\newtheorem{lemma}[theorem]{Lemma}
\newtheorem{proposition}[theorem]{Proposition}
\newtheorem{definition}[theorem]{Definition}

\numberwithin{equation}{section}

\allowdisplaybreaks

\title{Quenched large deviations for randomly weighted geodesic random walks}

\author{Rik Versendaal\footnote{\raggedright Delft Institute of Applied Mathematics, TU Delft,   Netherlands;~\href{mailto:r.versendaal@tudelft.nl}{\texttt{r.versendaal@tudelft.nl}}.}}

\date{\today}

\begin{document}

\maketitle

\begin{abstract}
    We consider weighted geodesic random walks in a complete Riemannian manifold $(M,g)$. We show that for almost all sequences of weights (with respect to a suitable measure), these weighted geodesic random walks satisfy, when suitably scaled, a large deviation principle with a universal rate function. This extends the results from \cite{GKR16}, where this was shown for the real-valued case. It turns out the argument is also valid for general vector spaces. This allows us to use the methodology of \cite{Ver19}, in which large deviations for geodesic random walks are obtained from large deviation estimates for associated random walks in tangent spaces.  
\end{abstract}

\medskip

\noindent
{\em Key words:} Geodesic random walks, weighted random walks, large deviations, Cram\'er's theorem, stochastic processes in manifolds 

\medskip

\noindent
{\em 2020 Mathematics Subject Classification:} 60F10, 60G50, 60D05 .

\section{Introduction}

Let $X_1,X_2,\ldots$ be a sequence of independent, identically distributed random variables in $\RR$ and consider the random walk $S_n = \sum_{i=1}^n X_i$. A classical result in large deviation theory is  Cram\'er's theorem (\cite[Theorem 2.2.3]{DZ98}), which roughly states that
$$
\PP\left(\frac1nS_n \approx x\right) \approx e^{-nI(x)}
$$
where 
$$
I(x) = \sup_{t\in \RR} \left\{tx - \Lambda(t))\right\}.
$$
with $\Lambda(t) = \log\EE(e^{tX_1})$.

In \cite{GKR16}, large deviations were studied for weighted random walks. More precisely, given unit vectors $\theta^n \in \RR^n$, consider the random variables
$$
W_n^{\theta^n} := \frac1{\sqrt n}\sum_{i=1}^n \theta_i^nX_i. 
$$
It is then shown that for almost all sequences $\{\theta^n\}_n$ of weights (with respect to the measure $\sigma$ as defined in Section \ref{sec:weights_distribution}), we have
$$
\PP\left(W_n^{\theta^n} \approx x\right) \approx e^{-nI_w(x)}.
$$
Here,
$$
I_w(x) = \sup_{t\in \RR} \left\{tx - \Psi(t)\right\},
$$
where $\Psi(t) = \EE(\Lambda(tZ))$ with $Z \sim N(0,1)$. In particular, the rate function $I_w$ is independent of the weights $\theta^n$. This shows that for most weights, the associated weighted random walks have the same large deviations. Moreover, we see that the rate function in Cram\'er's theorem is different from this universal rate function $I_w$. As argued in \cite{GKR16}, this shows that Cram\'er's theorem is in a sense 'atypical'.\\ 

Our aim is to extend the result from \cite{GKR16} to weighted geodesic random walks in Riemannian manifolds. Geodesic random walks are piecewise geodesic paths, where the directions of the geodesics are chosen at random. The weights then determine the time for which we follow each direction. We refer to Section \ref{sec:GRW_weights} for a detailed description. 

In \cite{Ver19} it was shown how Cram\'er's theorem for general vector spaces can be extended to geodesic random walks. Upon analysing the proof in \cite{GKR16}, one realizes that the result remains valid in $\RR^d$, and ultimately in a general vector space $V$, see Theorem \ref{thm:GKR_LDP_vector} for a precise statement. This opens up the possibility to apply techniques from \cite{Ver19} to study large deviations for weighted geodesic random walks based on the results in \cite{GKR16}. A key step in this methodology is the splitting of the geodesic random walks in smaller pieces. Where in \cite{Ver19} these pieces are identically distributed, because of the weights this is no longer the case. We overcome this by using the symmetries of the measure $\sigma$ on the weights (see Section \ref{sec:weights_distribution}) showing that each piece of the weighted geodesic random walk still follows the same large deviations, which is ultimately what we need. 

Furthermore, our work demonstrates that the approach in \cite{Ver19} is rather robust, and emphasizes the relevant properties of the stochastic processes for the methodology to work. In particular, it motivates that the methods in \cite{Ver19} can be extended to a general framework to study large deviations for discrete-time processes in Riemannian manifolds from their Euclidean counterparts. This can for instance be used to obtain a Riemannian analogue of the Gartner-Ellis theorem (see \cite[Section 2.3]{DZ98}) and large deviations for Markov chains with values in Riemannian manifolds. Furthermore, it can be used to extend recent results on large deviations for random projections of $l^p$-balls (\cite{GKR17,APT18,LR24}) and projections on finite-dimensional subspaces (\cite{SR23}) to Riemannian manifolds. This will be the topic of future work.\\ 

The paper is structured as follows. In Section \ref{sec:random_weight_GRW_main_thm} we define weighted geodesic random walks and state our main theorem (Theorem \ref{thm:weighted_GRW_LDP}). Furthermore, we formulate the extension of large deviation for weighted random walks as in \cite{GKR16} to general vector spaces. In Section \ref{sec:proof} we prove Theorem \ref{thm:weighted_GRW_LDP}. Following the ideas from \cite{Ver19}, the proof is split in two parts, proving the upper bound and lower bound for the large deviation principle for the weighted geodesic random walks separately.

\section{Randomly weighted geodesic random walks}\label{sec:random_weight_GRW_main_thm}

Let $(M,g)$ be a complete Riemannian manifold. Denote by $d$ the Riemannian distance function. In this section we define weighted geodesic random walks in $M$, and state our main result, Theorem \ref{thm:weighted_GRW_LDP}. Furthermore, we provide the extension of the large deviation result from \cite{GKR16} to general vector spaces, which is essential for the proof of Theorem \ref{thm:weighted_GRW_LDP}. We conclude with a discussion on how the result for vector spaces relates to large deviations for $k$-dimensional projections as in \cite{SR23}.

\subsection{Geodesic random walks with weighted increments}\label{sec:GRW_weights}

In a manifold, we cannot define random walks as sums of random variables. Instead, \emph{geodesic random walks} are defined recursively by following pieces of geodesics (see e.g. \cite{KRV18,Ver19}). We then introduce the weights as the time for which we follow each piece of geodesic.
 
The procedure of following geodesics is encoded by the \emph{Riemannian exponential map}. The map $\Exp:TM \to M$ is defined by $\Exp_xv = \Exp(x,v) = \gamma(1)$, where $\gamma:[0,1] \to M$ is the geodesic with $\gamma(0) = x$ and $\dot\gamma(0) = v$. Since we assume $M$ is complete, $\Exp$ is defined on all of $TM$. With this notation at hand, we define weighted geodesic random walks.

\begin{definition}[Weighted geodesic random walks]\label{def:scaled_GRW}
Fix $x_0 \in M$, $n \in \NN$ and let $\alpha^n \in \RR^n$. A pair $(\{\Ss_k^\alpha\}_{0\leq k \leq n},\{X_k\}_{1 \leq k \leq n})$ is called a \emph{weighted geodesic random walk} with increments $\{X_k\}_{1\leq k\leq n}$ and weights $\alpha^n$, and started at $x_0$, if the following hold:
\begin{enumerate}
\item $\Ss_0^{\alpha^n} = x_0$,
\item $X_k \in T_{\Ss_{k-1}^{\alpha^n}}M$ for all $1 \leq k \leq n$,
\item $\Ss_k^{\alpha^n} = \Exp_{\Ss_{k-1}^{\alpha^n}}(\alpha^n_kX_k)$ for all $1 \leq k \leq n$.
\end{enumerate}
\end{definition}

We will consider the increments $\{X_n\}_{n\geq1}$ to be random variables. Note that the tangent space from which the next increment is drawn depends on the current position of the geodesic random walk. Therefore, we will put a collection $\{\mu_x\}_{x\in M}$ of probability measures on the tangent bundle, where $\mu_x \in \Pp(T_xM)$ is a probability measure on $T_xM$ for every $x \in M$.

\subsubsection{Identically distributed increments and parallel transport}

To compare probability distributions on different tangent spaces, we identify tangent spaces at different points using parallel transport. For $x,y \in M$ and a curve $\gamma$ connecting $x$ and $y$, we denote by $\tau_{\gamma;xy}:T_xM \to T_yM$ parallel transport along $\gamma$. If $\gamma$ is a shortest geodesic between $x$ and $y$, we simply write $\tau_{xy}$, omitting the reference to $\gamma$. Generally, we only use this notation when $x$ and $y$ are sufficiently close, so that the shortest geodesic is unique.\\

We say the measures $\mu_x$ and $\mu_y$ are \emph{identical} if for any piecewise smooth curve $\gamma$ connecting $x$ and $y$ we have
$$
\mu_x = \mu_y\circ \tau_{\gamma;xy},
$$
i.e., the distributions are invariant under parallel transport along \emph{any} piecewise smooth curve.

\subsubsection{Weight distribution}\label{sec:weights_distribution}

Our goal is to show that for almost all weights, the sequence of weighted geodesic random walks, when suitably scaled, satisfies a large deviation principle with a universal rate function. For such statements to make sense, we need to introduce a probability distribution on the space of weights. For this, we follow \cite{GKR16}.

Let $\bS^{n-1}$ be the unit sphere in $\RR^n$ and denote by $\sigma_{n-1}$ the uniform measure on $\bS^{n-1}$. Define the product space $\bS = \prod_{n=1}^\infty \bS^{n-1}$ with projections $\pi_n:\bS \to \bS^{n-1}$. We consider a probability measure $\sigma$ on $\bS$ such that
$$
\sigma \circ \pi_n = \sigma_{n-1}
$$
for all $n \in \NN$.

\subsection{Main result}

Our main result extends Theorem 2 from \cite{GKR16} to weighted geodesic random walks. For this, consider $\theta \in \bS$ a triangular array of weights. We define $\{\Ss_k^{\theta^n}\}_{0\leq k\leq n}$ to be the weighted geodesic random walk with respect to the scaled weights $\alpha^n = \frac{1}{\sqrt n}\theta^n$ (see Definition \ref{def:scaled_GRW}). Our main result concerns the large deviations for the sequence $\{\Ss_n^{\theta^n}\}_{n\in\NN}$.

\begin{theorem}\label{thm:weighted_GRW_LDP}
    Let $\theta \in \bS$ and let $\{\Ss_n^{\theta^n}\}_{n \in \NN}$ be the weighted geodesic random walk as defined above. Assume the increments of the geodesic random walks are bounded, independent and identically distributed. Let $\sigma$ be a measure on $\bS$ as in Section \ref{sec:GRW_weights}. Then for $\sigma$-almost every $\theta$, the sequence $\{\Ss_n^{\theta^n}\}_{n \in \NN}$ satisfies a large deviation principle with rate function
    $$
    I(x) = \inf_{v \in \exp_{x_0}^{-1}x} \sup_{\lambda\in T_{x_0}M} \ip{\lambda}{v} - \Psi(\lambda),
    $$
    where $\Psi(\lambda) = \EE(\Lambda_{x_0}(Z\lambda))$ with $Z \sim N(0,1)$ and $\Lambda_{x_0}(\lambda) = \log\int_{T_{x_0}M} e^{\ip{\lambda}{w}}\mu_{x_0}(\dd w)$.
\end{theorem}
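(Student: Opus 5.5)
The plan is to follow the strategy of \cite{Ver19}: deduce the large deviation principle for $\Ss_n^{\theta^n}$ from large deviation estimates for associated weighted random walks in a single tangent space. The vector-space version of \cite{GKR16} (Theorem \ref{thm:GKR_LDP_vector}) supplies these estimates. Applied in $V = T_{x_0}M$ with i.i.d.\ increments of law $\mu_{x_0}$, it gives that for $\sigma$-almost every $\theta$ the sums $W_n^{\theta^n} = \frac{1}{\sqrt n}\sum_{i=1}^n \theta^n_i X_i$ satisfy an LDP with good convex rate function $I_w(v) = \sup_\lambda \ip{\lambda}{v} - \Psi(\lambda)$. Boundedness of the increments makes $\Lambda_{x_0}$, and hence $\Psi$, finite everywhere, so $I_w$ is good. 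Because the increments are identical under parallel transport, we may realise the walk by drawing i.i.d.\ $X_i\sim\mu_{x_0}$ and transporting each to $\Ss_{i-1}$ along the previous geodesic pieces. Compared with the straight sum, the manifold walk then differs only through curvature corrections.

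\textbf{Splitting.} Fix a large integer $m$ and divide $\{1,\dots,n\}$ into $m$ consecutive blocks of length about $n/m$. On block $j$, the walk started at $\Ss_{(j-1)n/m}$ moves a distance at most $\frac{C}{\sqrt n}\sum_{i\in \text{block}}|\theta_i^n|$. We compare it with $\Exp$ of the transported block sum $\frac{1}{\sqrt n}\sum_{i \in \text{block}} \theta^n_i X_i$. Using bounded increments and a comparison estimate for the exponential map (of the type used in \cite{Ver19}, based on Jacobi fields and curvature bounds on a compact neighbourhood), the error per block is of order $(\text{block displacement})^2$, i.e.\ $O(1/m^2)$ on the relevant event. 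Summed over blocks this gives $O(1/m)$, which is superexponentially negligible after letting $m\to\infty$. One caveat: the walk is not confined to a compact set a priori. However, the total displacement $\frac{C}{\sqrt n}\sum_i|\theta^n_i| \le C$ deterministically by Cauchy–Schwarz, so everything stays in the closed ball $\bar B(x_0,C)$, which is compact by completeness. This gives uniform geometry.

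\textbf{Block LDPs and the main obstacle.} I expect the main obstacle to be showing that each block's transported increment satisfies an LDP with the \emph{same} rate $\frac1m I_w(m\,\cdot)$, uniformly and independently across blocks. In \cite{Ver19} the blocks are i.i.d.; here the weights differ between blocks. I would first use the symmetries of $\sigma_{n-1}$, namely invariance under coordinate permutations and sign changes. Writing $\theta^n = g/|g|$ with $g$ Gaussian, the restriction of $\theta^n$ to a block, rescaled by $\sqrt{m}$, has the same law as a uniform vector on $\bS^{n/m-1}$ up to a radial factor that concentrates at $1$. Then I would rerun the Borel–Cantelli argument of \cite{GKR16} blockwise to get, for $\sigma$-a.e.\ $\theta$, simultaneous LDPs for all blocks and all (countably many) $m$. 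Given those, the conditional independence of blocks given the starting points, together with the parallel-transport invariance of $\mu_x$, lets the blocks' contributions be treated as independent.

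\textbf{Upper and lower bounds.} For the upper bound, cover a closed set $F$ by small balls. Bound the probability of landing near $x$ by the probability that the tangent-space sum lies near $\Exp_{x_0}^{-1}x$, up to the $O(1/m)$ error. This yields $\inf_{v\in\Exp^{-1}_{x_0}x} I_w(v)$, with exponential tightness coming from the compact ball. For the lower bound, given $v\in\Exp_{x_0}^{-1}x$, force each block increment to lie near $v/m$ transported along the broken geodesic. The probability is at least $\exp(-n\sum_j \frac1m I_w(v)-o(n))$, and the endpoint is within $O(1/m)$ of $\Exp_{x_0}v = x$. Convexity of $I_w$ makes the splitting into equal pieces optimal. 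Finally, letting $m\to\infty$ completes the proof.
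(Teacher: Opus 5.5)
There is a genuine gap, and it is in the upper bound. Your treatment of the block large deviations is fine. Rerunning the Borel--Cantelli argument of \cite{GKR16} blockwise, via $\theta^n=g/|g|$, is a more hands-on substitute for Corollary~\ref{cor:Varadhan_limit_mgf}. Forcing every block near $v/m$ is essentially the paper's lower bound. There, ``within $O(1/m)$ of $x$'' should be replaced by a block tolerance chosen uniformly in $m$, which is what Lemma~\ref{lem:continuity} supplies.

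\textbf{Why the upper bound fails.} You bound $\PP(\Ss_n^{\theta^n}\approx x)$ by the probability that the single sum $W_n=\frac1{\sqrt n}\sum_i\theta_i^nX_i$ lies near $\Exp_{x_0}^{-1}x$, ``up to the $O(1/m)$ error''. Your block comparison does not give this. It only shows that $\Ss_n^{\theta^n}$ is $O(1/m)$-close to the broken geodesic whose $j$-th piece is the transported block sum $B_j$, i.e.\ to $\Tt_m(B_1,\dots,B_m)$. It says nothing about $\Exp_{x_0}(B_1+\dots+B_m)$.

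In your realisation, $\Ss_n^{\theta^n}$ is exactly the Cartan development of the polygon in $T_{x_0}M$ with increments $\theta_i^nX_i/\sqrt n$. The endpoint of a development is in general not $\Exp_{x_0}$ of the polygon's endpoint. The discrepancy is of order one as soon as the pieces are not nearly aligned. For example, on the unit sphere a broken geodesic running once around the triangle with three right angles (sides $\pi/2$) returns to $x_0$. Its increments, transported back along the path to $T_{x_0}M$, sum to a vector of length $\pi/2$, which lies at distance $\pi/2$ from $\Exp_{x_0}^{-1}x_0$. The paper makes the same point after Proposition~\ref{prop:RW_to_tangent_space}: comparing with a single tangent-space walk leaves an $\Oo(1)$ error, which is why the $m$ blocks are never recombined into one sum. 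So the event inclusion your upper bound rests on is false.

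\textbf{What is missing.} The missing ingredient is geometric. You have to keep the whole $m$-tuple of transported block vectors and proceed as follows:
\begin{enumerate}
\item Prove a joint upper bound in $(T_{x_0}M)^m$ with rate $\frac1m\sum_j I_w(mv_j)$, up to a perturbation. This uses independence of the blocks, the block asymptotics, and the comparison error entering as a small perturbation of the log-moment generating function (Propositions~\ref{prop:upperbound_MGF} and~\ref{prop:upper_bound_LDP_tangent}).
\item Push this bound through the continuous map $\Tt_m$, which reproduces $\Ss_n^{\theta^n}$ exactly.
\item Show that every broken geodesic from $x_0$ with block displacements $v_1,\dots,v_m$ ending in $F$ satisfies $\frac1m\sum_j I_w(mv_j)\geq\inf_{v\in\Exp_{x_0}^{-1}F}I_w(v)$, up to a perturbation that vanishes as $m\to\infty$. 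This is \cite[Proposition 6.9]{Ver19}, as used in Proposition~\ref{prop:upper_bound_LDP_theorem}.
\end{enumerate}
Convexity of $I_w$, which you invoke, only yields $\frac1m\sum_jI_w(mv_j)\geq I_w\bigl(\sum_jv_j\bigr)$, and $\sum_jv_j$ need not be anywhere near $\Exp_{x_0}^{-1}F$. The statement that no bent path into $F$ is cheaper than a true geodesic is a variational fact that uses the parallel-transport invariance of $I_w$. It is exactly where curvature enters the Riemannian upper bound, and your outline does not supply it.
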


The proof is inspired by the proof of Cram\'er's theorem for geodesic random walks in \cite{Ver19}. The key idea is to associate the weighted geodesic random walk $\Ss_n^{\theta^n}$ to a weighted random walk in $T_{x_0}M$. The large deviations for this associated random walk follow from Theorem \ref{thm:GKR_LDP_vector}. Unfortunately, the connection between the two random walks does not immediately allow us to tranfer the large deviations to $\Ss_n^{\theta^n}$. Instead, we carefully analyse the connection between the two processes and prove the upper bound and lower bound of the large deviation principle for $\{\Ss_n^{\theta^n}\}_{n \in \NN}$ separately. In particular, Theorem \ref{thm:weighted_GRW_LDP} follows immediately from Propositions \ref{prop:upper_bound_LDP_theorem} and \ref{prop:lower_bound_LDP_theorem}.

\subsubsection{Large deviations for weighted random  walks in vector spaces}

To prove Theorem \ref{thm:weighted_GRW_LDP}, we need an extension of \cite[Theorem 2]{GKR16} to and arbitrary vector space $V$, which for us will be the tangent space $T_{x_0}M$. Let $X_1,X_2,\ldots$ be a sequence of independent, identically distributed random variables in $V$. Let $\theta \in \bS$ be a sequence of coefficients. We define
\begin{equation}\label{eq:weighted_RW_vector_V}
W_k^{\theta^n} := \frac{1}{\sqrt n}\sum_{i=1}^k \theta_i^nX_i
\end{equation}
for $k = 1,2,\ldots,n$.

The proof in \cite{GKR16} for the large deviations for weighted random walks in $\RR$ extends to higher dimensional Euclidean spaces. Upon choosing a basis, we get the following. 

\begin{theorem}\label{thm:GKR_LDP_vector}
    Let $X_1,X_2,\ldots$ be a sequence of independent, identically distributed random variables in a vector space $V$. Denote by $\Lambda(\lambda) = \log\EE(e^{\ip{\lambda}{X_1}})$ the log-moment generating function of $X_1$. Define the random variables $W_k^{\theta^n}$ as in \eqref{eq:weighted_RW_vector_V} and let $\sigma$ be as in Section \ref{sec:weights_distribution}. Then for $\sigma$-almost every $\theta$, the sequence $\{W_n^{\theta^n}\}_{n\in\NN}$ satisfies the large deviation principle with good rate function
    \begin{equation}\label{eq:rf_GKR}
    I(v) = \Psi^*(v) = \sup_{\lambda \in V} \left\{\ip{v}{\lambda} - \Psi(\lambda)\right\},
    \end{equation}
    where $\Psi(\lambda) = \EE_Z(\Lambda(Z\lambda))$ with $Z \sim N(0,1)$.
\end{theorem}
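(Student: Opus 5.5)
The strategy is the Gärtner–Ellis theorem in $V\cong\RR^d$, after fixing a basis and an inner product. The \emph{quenched} log-moment generating function of $W_n^{\theta^n}$, at scale $n$, is
$$
\frac1n\log\EE\left(e^{n\ip{\lambda}{W_n^{\theta^n}}}\right)=\frac1n\sum_{i=1}^n\Lambda\big(\sqrt n\,\theta_i^n\lambda\big).
$$
I will show that for $\sigma$-a.e. $\theta$ this converges to $\Psi(\lambda)=\EE_Z\Lambda(Z\lambda)$ for every $\lambda\in V$ simultaneously. As in \cite{GKR16}, I assume $\Lambda$ is finite everywhere, which holds for the bounded increments used later.

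\textbf{Gaussian representation.} Under $\sigma_{n-1}$ the vector $\theta^n$ has the law of $G/|G|$ with $G=(g_1,\dots,g_n)$ i.i.d. $N(0,1)$. Moreover $|G|/\sqrt n\to1$ exponentially fast. Consider the empirical measure $L_n=\frac1n\sum_i\delta_{\sqrt n\theta_i^n}$ and the function $f_\lambda(z)=\Lambda(z\lambda)$. Then
$$
\frac1n\sum_i\Lambda(\sqrt n\theta^n_i\lambda)=\int f_\lambda\,\dd L_n .
$$
The aim is to show $\int f_\lambda\,\dd L_n\to\EE f_\lambda(Z)$ $\sigma$-a.s.

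\textbf{Concentration and Borel--Cantelli.} Only the marginals of $\sigma$ are prescribed, so the events for different $n$ need not be independent. I will therefore prove summable bounds $\sigma_{n-1}\big(|\int f_\lambda\,\dd L_n-\EE f_\lambda(Z)|>\epsilon\big)\le c_n$ with $\sum c_n<\infty$, and then apply Borel--Cantelli.

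This step is the main obstacle, because $f_\lambda$ grows up to linearly at infinity: if $|X_1|\le R$, then $|\Lambda(z\lambda)|\le R|z||\lambda|$. For general $X_1$ it can grow faster. I would proceed in three substeps.
\begin{enumerate}
\item Truncate at level $K$ and approximate $f_\lambda$ on $[-K,K]$ by Lipschitz functions.
\item Replace $\sqrt n\theta_i^n$ by $g_i\sqrt n/|G|$. Use exponential concentration of $|G|^2/n$ around $1$ together with Hoeffding/Bernstein bounds for the bounded part $\frac1n\sum_i f_\lambda^{K}(g_i)$. This gives bounds of the form $e^{-cn}$.
\item Control the tail $\frac1n\sum_i|g_i|\One_{|g_i|>K}$ by a Bernstein/sub-exponential tail bound, again exponentially small in $n$ and with mean $\to0$ as $K\to\infty$.
\end{enumerate}
In the unbounded case where only $\EE\Lambda(Z\lambda)<\infty$ is known, the tail needs a finer argument as in \cite{GKR16}. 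I would follow their treatment coordinate-free, since $z\mapsto\Lambda(z\lambda)$ is one-dimensional for fixed $\lambda$.

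\textbf{From countably many $\lambda$ to all $\lambda$.} The previous step gives convergence for $\lambda$ in a countable dense set $D\subset V$ on a common full-measure set. Each limit functional is convex in $\lambda$, and $\Psi$ is convex and finite. Pointwise convergence of convex functions on a dense set to a finite convex limit implies locally uniform convergence everywhere (Rockafellar), so the limit is $\Psi$ for all $\lambda$.

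\textbf{Applying Gärtner--Ellis.} $\Psi$ is finite and convex. It is differentiable, since $\Lambda$ is smooth and differentiation under $\EE_Z$ is justified by dominated convergence using the growth bounds. Hence $\Psi$ is essentially smooth, and the Gärtner--Ellis theorem \cite[Theorem 2.3.6]{DZ98} yields the LDP for $W_n^{\theta^n}$ with good rate function $\Psi^*$, which is \eqref{eq:rf_GKR}. The rate function is independent of the basis chosen, since both $\Psi$ and the Legendre transform are intrinsic to $(V,\ip{\cdot}{\cdot})$.
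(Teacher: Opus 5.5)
Your proposal is correct and takes the same route the paper relies on. The paper gives no separate argument; it only asserts that the proof of \cite{GKR16} carries over to $V\cong\RR^d$ after fixing a basis, namely the Gaussian representation $\theta^n\stackrel{d}{=}G/|G|$, $\sigma$-a.s.\ convergence of $\frac1n\sum_i\Lambda(\sqrt n\,\theta_i^n\lambda)$ to $\Psi(\lambda)$ via summable bounds valid for any coupling with the prescribed marginals, and then G\"artner--Ellis; this is exactly what you carry out, in full detail for the bounded case used later. One correction: finiteness of $\Lambda$ alone does not make $\Psi(\lambda)=\EE_Z\Lambda(Z\lambda)$ finite, since a finite $\Lambda$ can grow doubly exponentially, so your standing hypothesis should be $\Psi<\infty$, which your G\"artner--Ellis step actually uses and which holds automatically for bounded increments.
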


By using Varadhan's lemma (see e.g \cite[Theorem 4.3.1]{DZ98}), and the symmetry properties of the measure $\sigma$, we can deduce the following from Theorem \ref{thm:GKR_LDP_vector}. This is essential for our proof of Theorem \ref{thm:weighted_GRW_LDP}, where we split up the weighted geodesic random walks in smaller pieces.

\begin{corollary}\label{cor:Varadhan_limit_mgf}
    Let the assumptions of Theorem \ref{thm:GKR_LDP_vector} be satisfied. Then for $\sigma$-almost every $\theta$ we have
    $$
    \lim_{n \to \infty} \frac 1n \log \EE\left(e^{n\ip{\lambda}{W_{\lfloor k^{-1}n\rfloor}^{\theta}}}\right) = \frac1k\Psi(\lambda).
    $$ 
\end{corollary}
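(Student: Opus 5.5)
Write $m = \lfloor n/k\rfloor$. The idea is to reduce $W_m^{\theta^n}$, a partial sum of length $m$ inside a weight vector of length $n$, to a full weighted sum of length $m$ with weights drawn from $\sigma_{m-1}$. Then Theorem \ref{thm:GKR_LDP_vector} and Varadhan's lemma will give the limit. Since the increments are bounded, $\Lambda$ is finite and smooth everywhere, and $\ip{\lambda}{W}$ is bounded on compacts. Varadhan's lemma therefore applies directly: the tail condition holds because $|W_m^{\theta^n}|\le C\sqrt{m/n}$, which is bounded by Cauchy--Schwarz.

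\textbf{Step 1 (rewriting).} Write $\theta^n_{[m]}=(\theta^n_1,\dots,\theta^n_m)$ and $r_n=|\theta^n_{[m]}|$. Then
$$
W_m^{\theta^n}=\frac{r_n\sqrt m}{\sqrt n}\cdot\frac{1}{\sqrt m}\sum_{i=1}^m \eta^{m}_i X_i,\qquad \eta^m:=\theta^n_{[m]}/r_n\in\bS^{m-1}.
$$
Under $\sigma_{n-1}$ the direction $\eta^m$ is uniform on $\bS^{m-1}$ and independent of $r_n$. This follows from rotation invariance, or from the Gaussian representation $\theta^n=G/|G|$. Moreover $r_n^2\to 1/k$ almost surely by the law of large numbers, with Borel--Cantelli via exponential concentration of Beta variables. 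Hence $r_n\sqrt{m/n}\to 1/k$.

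\textbf{Step 2 (LDP for the normalized piece).} This is the main obstacle. Theorem \ref{thm:GKR_LDP_vector} is an almost-sure statement for a triangular array sampled from $\sigma$. Here, however, the array $\eta^m$ is built from $\theta^n$ with $n\approx km$. The issue is that $\sigma$ is only specified through its marginals, so the joint law of the rows is unknown.

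My approach is to avoid the joint law entirely. I would revisit the GKR argument, which goes through only marginal statements. Its key input is that for $\sigma_{m-1}$-typical $\eta$, $\frac1m\sum_i\Lambda(\sqrt m\,\eta_i\lambda)\to\Psi(\lambda)$, uniformly on compacts. This holds with probability $1-e^{-cm}$, a summable bound, so Borel--Cantelli needs only the marginals. Since $\eta^m$ has marginal law $\sigma_{m-1}$ for each $n$, the same summable estimates hold along $n$. Thus for $\sigma$-a.e.\ $\theta$, $\frac1m\sum_{i\le m}\Lambda(\sqrt m\,\eta^m_i\lambda)\to\Psi(\lambda)$.

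By independence of the $X_i$, the scaled log-mgf then satisfies $\frac1m\log\EE e^{m\ip{\lambda}{\cdot}}\to\Psi(\lambda)$. In fact this gives the needed limit directly, without going through the LDP; Gärtner--Ellis or Varadhan then recover the statement as phrased.

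\textbf{Step 3 (assembling).} By independence,
$$
\frac1n\log\EE e^{n\ip{\lambda}{W_m^{\theta^n}}}=\frac1n\sum_{i=1}^m\Lambda(\sqrt n\,\theta^n_i\lambda)=\frac mn\cdot\frac1m\sum_{i=1}^m\Lambda\bigl(\sqrt m\,\eta^m_i\,(r_n\sqrt{n/m})\lambda\bigr).
$$
We have $r_n\sqrt{n/m}\to1$ almost surely, and Step 2 gives convergence uniformly for arguments in compact sets, using convexity and the local Lipschitz property of $\Lambda$. Hence the expression converges to $\frac1k\Psi(\lambda)$ for $\sigma$-a.e.\ $\theta$. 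Finally, a countable dense set of $\lambda$ together with convexity in $\lambda$ makes the null set independent of $\lambda$.
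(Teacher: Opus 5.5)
Your argument is correct, but it takes a genuinely different route from the paper. The paper never computes the block log-moment generating function. It gets the full-sum limit $\Psi(\lambda)$ from Theorem \ref{thm:GKR_LDP_vector} via Varadhan's lemma. It then asserts that the first-block limit $\Psi_k(\lambda)$ exists and is $\sigma$-a.s.\ constant, uses permutation invariance of $\sigma_{n-1}$ to give all $k$ blocks the same limit, and concludes $k\Psi_k=\Psi$ from independence. That argument is short and uses the theorem as a black box. It also treats every block $(n_{i-1},n_i]$ at once, which is what is later needed for $Y_i^n$ and $\bar Y_i^n$. However, the existence and constancy of $\Psi_k$ are only sketched there. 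As you noticed, almost-sure statements under a $\sigma$ with only prescribed marginals need care.

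Your direct computation, $\frac mn\cdot\frac1m\sum_{i\le m}\Lambda(\sqrt m\,\eta^m_i c_n\lambda)$ with $c_n=r_n\sqrt{n/m}\to1$, buys several things:
\begin{enumerate}
\item It bypasses the LDP and Varadhan's lemma entirely.
\item It proves existence of the limit.
\item It needs only summable marginal bounds.
\item It gives one null set for all $\lambda$, which is useful when the corollary is fed into the supremum over $\lambda$ in the upper bound.
\item It extends verbatim to other blocks.
\end{enumerate}
The price is that you reprove a GKR-type concentration estimate instead of quoting the theorem. You also use bounded increments, which is the standing hypothesis of Theorem \ref{thm:weighted_GRW_LDP} rather than of Theorem \ref{thm:GKR_LDP_vector}.

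Step 2 only asserts that estimate, so you should make it explicit. Since $|X|\le r$, $\Lambda$ is $r$-Lipschitz. Write $\sqrt m\,\eta_i=G_i\sqrt m/|G|$ with $G\sim N(0,I_m)$. Then Cauchy--Schwarz bounds the effect of the normalization by $r|\lambda|\,\bigl||G|/\sqrt m-1\bigr|$. Finally, both $|G|/\sqrt m$ and the i.i.d.\ average $\frac1m\sum_i\Lambda(G_i\lambda)$, whose summands are bounded by $r|\lambda||G_i|$, concentrate at rate $e^{-cm}$.
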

\begin{proof}
    By Varadhan's lemma (\cite[Theorem 4.3.1]{DZ98}) it follows from the large deviation principle in Theorem \ref{thm:GKR_LDP_vector} that
    $$
    \lim_{n\to\infty} \frac1n\log \EE\left(e^{n\ip{\lambda}{W_n^{\theta^n}}}\right) = \Psi(\lambda),
    $$
    By comparing sequences $\theta$ and $\tilde \theta$ for which $\theta^n$ and $\tilde\theta^n$ only differ in the first $\lfloor nk^{-1}\rfloor$ elements, the above implies that
    $$
    \Psi_k(\lambda) := \lim_{n\to\infty} \frac1n\log \EE\left(e^{n\ip{\lambda}{W_{\lfloor nk^{-1}\rfloor}^{\theta^n}}}\right)
    $$
    is independent of $\theta$ for $\sigma$-almost every $\theta$. 
    
    Now write $n_i = i\lfloor nk^{-1}\rfloor$ for $i = 1,\ldots,k-1$ and $n_k = n$. Since the uniform distribution on $\bS^{n-1}$ is permutation invariant, it follows that for $\sigma$-almost every $\theta$ we have
    $$
    \lim_{n\to\infty} \frac1n\log \EE\left(e^{n\ip{\lambda}{W_{n_{i+1}}^{\theta^n} - W_{n_i}^{\theta^n} }}\right) = \Psi_k(\lambda).
    $$
    for all $i = 1,\ldots,k-1$. Since also
    $$
    \lim_{n\to\infty} \frac1n\log \EE\left(e^{n\ip{\lambda}{W_n^{\theta^n}}}\right) = \sum_{i=0}^{k-1} \lim_{n\to\infty} \frac1n\log \EE\left(e^{n\ip{\lambda}{W_{n_{i+1}}^{\theta^n} - W_{n_i}^{\theta^n} }}\right),
    $$
    by independence of the increments, it follows that $\Psi_k(\lambda) = \frac1k\Psi(\lambda)$ as desired.
\end{proof}

\subsubsection{Connection to large deviations for $k$-dimensional projections}

    In \cite{SR23}, large deviations are studied for random multidimensional projections. In particular, for a sequence $Y_1,Y_2,\ldots$ of independent, identically distributed random variables in $\RR$, the authors consider (among many other things) sequences of $k$-dimensional projections of $Y^{(n)} := (Y_1,\ldots,Y_n)$. More precisely, for a sequence $\textbf{a} = \{\textbf{a}_{n,k}\}_{n\geq k}$ with each $\textbf{a}_{n,k}$ an orthonormal $k$-frame for $\RR^n$, large deviations are considered for the sequence
    $$
    P_n := \frac1{\sqrt n}\textbf{a}_{n,k}^TY^{(n)} \in \RR^k.
    $$
    It is shown that for almost all such sequences of projections (with respect to the Haar measure on the Stiefel manifold of orthogonal $k$-frames on $\RR^n$), the sequence $\{P_n\}_{n\geq k}$ satisfies the large deviation principle with rate function
    \begin{equation}\label{eq:rf_projections}
        I_{\mathrm{proj}}(v) = \sup_{\lambda \in \RR^k} \left\{\ip{\lambda}{v} - \EE_{Z_1,\ldots,Z_k}(\Lambda(\lambda_1Z_1 + \cdots + \lambda_kZ_k))\right\},
    \end{equation}
    where $Z_1,\ldots,Z_k$ are independent, standard normal random variables and $\Lambda$ is the log-moment generating function of $Y_1$.\\

    For a sequence $X_1,X_2,\ldots$ of independent, identically distributed random variables in $\RR^k$, the random variables $W_n^{\theta^n}$ as defined in \eqref{eq:weighted_RW_vector_V} can also be interpreted as such $k$-dimensional projections. Indeed, one projects the sequence $(X_1^1,\ldots,X_1^k,\ldots,X_n^1,\ldots,X_n^k) \in \RR^{nk}$ onto $\RR^k$. Therefore, one may wonder to what extent the results from \cite{SR23} are connected to the statement in Theorem \ref{thm:GKR_LDP_vector}. There are two main caveats:
    \begin{enumerate}
        \item For the results of \cite{SR23} to apply, the coordinates of $X_i$ must independent and identically distributed, i.e., the distribution $\mu$ of $X_i$ has to be of the form $\mu_1^{\otimes k}$.
        \item The projections we consider map the variables $X_1^i,X_2^i,\ldots,X_n^i$ onto the $i$-th coordinate. With respect to the measure considered in \cite{SR23}, the collection of such projections has measure 0. Additionally, for such projections, the coordinates are independent, identically distributed. 
    \end{enumerate}

    Because the set of projections we consider has measure 0, it is inconclusive whether the associated $k$-projections $P_n$ satisfy the large deviation principle with universal rate function $I_{\mathrm{proj}}$. 

    To investigate this, note that we are comparing projections of the form $P_{nk}$ to randomly weighted sums of the form $W_n^{\theta^n}$. The scaling of these random variables is different by a factor $\sqrt k$. Furthermore, the large deviations of $P_{nk}$ are at rate $nk$, while those for $W_n^{\theta^n}$ are at rate $n$. If we denote by $I_k$ the universal rate function in \eqref{eq:rf_GKR} for $\{W_n^{\theta^n}\}_{n\in \NN}$, the question thus becomes whether we have
    $$
    I_k(v) = kI_{\mathrm{proj}}\left(\frac{v}{\sqrt k}\right).
    $$
    We first consider two examples.

    \begin{example}\label{ex:normal}
        Suppose $\mu_1$ is a standard normal distribution, so that $\Lambda_{\mu_1}(t) = \frac12t^2$. Since $\mu = \mu_1^{\otimes k}$ we have
        $$
        I_k(v) = \sup_{\lambda \in \RR^k} \left\{\ip{\lambda}{v} - \sum_{i=1}^k \EE_Z(\Lambda_{\mu_1}(\lambda_iZ))\right\}
        = 
        \sup_{\lambda \in \RR^k} \left\{\ip{\lambda}{v} - \sum_{i=1}^k \frac12\lambda_i^2\right\}
        =
        \sum_{i=1}^k \frac12v_i^2 = \frac12|v|^2.
        $$
        
        Likewise, we find that
        $$
        I_{\mathrm{proj}}(v) 
        = 
        \sup_{\lambda \in \RR^k} \left\{\ip{\lambda}{v} - \frac12\EE_{Z_1,\ldots,Z_k}((\lambda_1Z_1 + \cdots + \lambda_kZ_k)^2)\right\} =
        \sup_{\lambda \in \RR^k} \left\{\ip{\lambda}{v} - \frac12|\lambda|^2\right\} = \frac12|v|^2
        $$
        Here we used that $Z_1,\ldots,Z_k$ are independent and $\EE(Z_i^2) =  1$. This shows that $I_k(v) = kI_{\mathrm{proj}}\left(\frac{v}{\sqrt k}\right)$. Note that this relies on the fact that $I_{\mathrm{proj}}(v)$ is not affected by the specific scaling with $k$.
    \end{example}

    Example \ref{ex:normal} remains true when $\mu_1$ is a general normal distribution. Intuitively, this can be explained as follows. Take any orthonormal $k$-frame $\mathbf{a}$ of $\RR^n$ and let $X^{(n)} = (X_1,X_2,\ldots,X_n)$ be a vector of independent, identically distributed random variables with a normal distribution. Let us write $a_1,\ldots,a_n$ for the rows of $\mathbf a$. Then
    $$
    \mathbf{a}^TX^{(n)} = \sum_{l=1}^n X_la_l \in \RR^k.
    $$ 
    In particular, this shows that each coordinate of the projection has a normal distribution. Furthermore, we can compute
    $$
    \Cov\left(\sum_{l=1}^n X_la_l^i,\sum_{l=1}^n X_la_l^j\right) = \EE(X_1^2)\sum_{l=1}^n a_l^ia_l^j = 0
    $$
    since the columns of $\mathbf a$ are orthogonal. This shows the coordinates of the projection are uncorrelated, and hence independent since they are normally distributed. This means we can treat the coordinates separately, each one being a randomly weighted sum as in \cite{GKR16}. Since this also holds for our special projections, the result of Example \ref{ex:normal} is indeed expected.
    
    \begin{example}\label{ex:poisson}
        Take $k = 2$ and $\mu_1$ to be Poi$(1)$. A computation shows that
        $$
        I_2(v) = \sup_{\lambda \in \RR^2} \left\{\lambda_1v_1 + \lambda_2v_2 - e^{\frac12\lambda_1^2} - e^{\frac12\lambda_2^2} + 2\right\}
        $$
        and
        $$
        I_{\mathrm{proj}}(v) = \sup_{\lambda \in \RR^2} \left\{\lambda_1v_1 + \lambda_2v_2 - e^{\frac12|\lambda|^2} + 1\right\}
        $$
        Numerically, we obtain that
        $$
        I_2((1,2)) \approx 1.7940, \qquad 2I_{\mathrm{proj}}\left(\frac{1}{\sqrt 2}(1,2)\right) \approx 1.8662.
        $$
        We thus find $v$ for which $I_k(v) \neq kI_{\mathrm{proj}}\left(\frac{v}{\sqrt k}\right)$. 
    \end{example}

    As Example \ref{ex:poisson} demonstrates, large deviation principles associated to the special sequences of $k$-projections we consider in general have a rate function different from the universal coming from \cite{SR23}. The reason is that for our special $k$-projections, the coordinates are independent and have the same distribution. This is in contrast to a typical sequence of $k$-projections, in which all elements are mapped to any of the coordinates, making their joint distribution differ significantly from a product distribution.

    However, as mentioned above, for our special projections, each coordinate is independent and has the same distribution. Moreover, we can interpret coordinate $i$ as a 1-projection of the sequence $(X_1^i,X_2^i,\ldots,X_n^i) \in \RR^n$. This motivates the following result.

    \begin{proposition}
        Let $\mu_1$ be a probability measure on $\RR$ and set $\mu = \mu_1^{\otimes k}$. Denote by $\Lambda_{\mu},\Lambda_{\mu_1}$ the log-moment generating function of the measure $\mu$, respectively $\mu_1$. Define
        $$
        I_k(v) = \sup_{\lambda \in \RR^k} \left\{\ip{v}{\lambda} - \EE_Z(\Lambda_\mu(Z\lambda))\right\}
        $$ 
        and
        $$
        I_{\mathrm{proj}}(v) = \sup_{\lambda \in \RR^k} \left\{\ip{v}{\lambda} - \EE_{Z_1,\ldots,Z_k}(\Lambda_\mu(\lambda_1Z_1 + \cdots + \lambda_kZ_k)\right\},
        $$
        where $Z,Z_1,\ldots,Z_k$ are independent, standard normal random variables. Then for all $c \in \RR$ we have
        $$
        I_k(c\One_k) = kI_{\mathrm{proj}}\left(\frac{c}{\sqrt k}\One_k\right),
        $$
        where $\One_k \in \RR^k$ denotes the all-ones vector.
    \end{proposition}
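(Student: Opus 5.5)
The plan is to reduce both rate functions to the one-dimensional function
$$
\psi(t) := \EE_Z\big(\Lambda_{\mu_1}(tZ)\big), \qquad t \in \RR,
$$
and to show that both sides of the claimed identity equal $k\,\psi^*(c)$, where $\psi^*(c) = \sup_{t\in\RR}\{ct - \psi(t)\}$. Throughout I read the function inside $I_{\mathrm{proj}}$ as $\Lambda_{\mu_1}$ evaluated at the scalar $\lambda_1Z_1+\cdots+\lambda_kZ_k$, as in \eqref{eq:rf_projections}, since $\Lambda_\mu$ takes arguments in $\RR^k$. All suprema are taken in $(-\infty,\infty]$, so infinite values of the log-moment generating functions cause no trouble. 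Two facts are used repeatedly:
\begin{itemize}
\item $\psi$ is even, because $Z$ and $-Z$ have the same law;
\item $\psi(0) = 0$, because $\Lambda_{\mu_1}(0) = 0$.
\end{itemize}

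\textbf{The left-hand side.} Since $\mu = \mu_1^{\otimes k}$, the log-moment generating function factorises as
$$
\Lambda_\mu(\lambda) = \sum_{i=1}^k \Lambda_{\mu_1}(\lambda_i).
$$
Hence
$$
\EE_Z\big(\Lambda_\mu(Z\lambda)\big) = \sum_{i=1}^k \psi(\lambda_i).
$$
The supremum defining $I_k(c\One_k)$ therefore splits coordinatewise:
$$
I_k(c\One_k) = \sum_{i=1}^k \sup_{\lambda_i\in\RR}\{c\lambda_i - \psi(\lambda_i)\} = k\,\psi^*(c).
$$
This is the same computation as in Example \ref{ex:normal}.

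\textbf{The right-hand side: reducing to the norm of $\lambda$.} Since $Z_1,\ldots,Z_k$ are independent standard normals, $\lambda_1Z_1+\cdots+\lambda_kZ_k$ has the same law as $|\lambda|Z$. Therefore
$$
\EE_{Z_1,\ldots,Z_k}\big(\Lambda_{\mu_1}(\lambda_1Z_1+\cdots+\lambda_kZ_k)\big) = \psi(|\lambda|).
$$
Set $w = \frac{c}{\sqrt k}\One_k$, so that $|w| = |c|$. The objective in $I_{\mathrm{proj}}(w)$ is then $\ip{w}{\lambda} - \psi(|\lambda|)$.
\begin{itemize}
\item Fix $r = |\lambda| \ge 0$. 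The penalty $\psi(r)$ is constant on the sphere $\{|\lambda| = r\}$.
\item By Cauchy--Schwarz, $\ip{w}{\lambda} \le |c|\,r$ on that sphere.
\item Equality holds at $\lambda = r\,\mathrm{sgn}(c)\One_k/\sqrt k$.
\end{itemize}
Taking the supremum over $r \ge 0$ gives
$$
I_{\mathrm{proj}}(w) = \sup_{r\ge0}\{|c|r - \psi(r)\}.
$$

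\textbf{The right-hand side: removing the restriction $r \ge 0$.} Evenness of $\psi$ gives $\sup_{r\ge0}\{|c|r-\psi(r)\} = \sup_{t\in\RR}\{|c|t - \psi(t)\}$. Indeed, for $t<0$ we have
$$
|c|t - \psi(t) \le |c|(-t) - \psi(-t),
$$
so negative $t$ never beat their reflections. The same symmetry yields $\psi^*(|c|) = \psi^*(c)$. Hence $I_{\mathrm{proj}}(w) = \psi^*(c)$, and
$$
k\,I_{\mathrm{proj}}\big(\tfrac{c}{\sqrt k}\One_k\big) = k\,\psi^*(c) = I_k(c\One_k).
$$

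\textbf{The main step.} The crux is the reduction of $I_{\mathrm{proj}}$ along the diagonal. It combines the rotation invariance of the Gaussian vector $(Z_1,\ldots,Z_k)$, which makes the penalty depend only on $|\lambda|$, with Cauchy--Schwarz, which is sharp exactly because $w$ points along $\One_k$. This also explains why the identity fails off the diagonal, as in Example \ref{ex:poisson}. There $I_k$ decouples coordinatewise while $I_{\mathrm{proj}}$ sees only $|\lambda|$, and the two match only when all coordinates of $v$ are equal. The remaining care is the bookkeeping with possibly infinite values, which the evenness argument handles uniformly.
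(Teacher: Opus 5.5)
Your proof is correct and follows the same route as the paper. You factorise $\Lambda_\mu$ to split $I_k(c\One_k)$ coordinatewise, use $\sum_i\lambda_iZ_i \stackrel{d}{=} |\lambda|Z$ to make the penalty in $I_{\mathrm{proj}}$ radial, restrict to the diagonal, and use the symmetry of $Z$ to remove the absolute values; your Cauchy--Schwarz argument on the spheres $\{|\lambda| = r\}$ additionally justifies the paper's unproved claim that the optimal $\lambda$ is of the form $t\One_k$, without assuming an optimiser exists.
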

    \begin{proof}
        By independence we have
        $$
        I_k(c\One_k) = \sum_{i=1}^k \sup_{\lambda_i \in \RR} \left\{c\lambda_i - \EE_Z(\Lambda_{\mu_1}(\lambda_i Z))\right\} = k\sup_{\lambda \in \RR} \left\{c\lambda - \EE_Z(\Lambda_{\mu_1}(\lambda Z))\right\}
        $$
        On the other hand, if $Z_1,\ldots,Z_k$ are independent, standard normal random variables, then for $\lambda \in \RR^k$ we have $\ip{\lambda}{(Z_1,\ldots,Z_k)} \stackrel{d}= |\lambda|Z$ with $Z$ standard normal. This implies that
        $$
        I_{\mathrm{proj}}(v) = \sup_{\lambda \in \RR^k} \left\{\ip{\lambda}{v} - \EE_Z(\Lambda_{\mu_1}(|\lambda|Z))\right\}
        $$
        For $v = c\One_k$, the optimal $\lambda$ will be of the form $t\One_k$. As a consequence, we get
        $$
        I_{\mathrm{proj}}(c\One_k) = \sup_{t \in \RR} \left\{ktc - \EE_Z(\Lambda_{\mu_1}(\sqrt k|t|Z))\right\} = \sup_{t \in \RR} \left\{ktc - \EE_Z(\Lambda_{\mu_1}(\sqrt ktZ))\right\},
        $$
        where we used that $|t|Z \stackrel{d}= tZ$. From this it follows that
        $$
        kI_{\mathrm{proj}}\left(\frac{c}{\sqrt k}\One_k\right) = k\sup_{t \in \RR} \left\{\sqrt ktc - \EE_Z(\Lambda_{\mu_1}(\sqrt ktZ))\right\} = k\sup_{t \in \RR} \left\{tc - \EE_Z(\Lambda_{\mu_1}(tZ))\right\} = I_k(c\One_k) 
        $$
        as desired.
    \end{proof}

\section{Proof of Theorem \ref{thm:weighted_GRW_LDP}} \label{sec:proof}

As explained below Theorem \ref{thm:weighted_GRW_LDP}, we prove the lower and upper bound of the large deviation principle separately. In particular, Theorem \ref{thm:weighted_GRW_LDP} follows immediately from Propositions \ref{prop:lower_bound_LDP_theorem} and \ref{prop:upper_bound_LDP_theorem}. Before we get to those, we first introduce some notation and preliminary results.

\subsection{Notation and preliminary results}

Our proof is inspired by the methods in \cite{Ver19}. Therefore, we rely on connecting the random walks $\{\Ss_l^{\theta^n}\}_{0\leq l \leq n}$ to weighted random walks in the tangent space $T_{x_0}M$. Naively, we could consider
$$
v_l^n := \exp_{x_0}^{-1}\left(\Ss_l^{\theta^n}\right) \in T_{x_0}M.
$$
However, two problems arise. Most importantly, the Riemannian exponential map need not be invertible, so that $v_l^n$ cannot be uniquely defined. But even if it is invertible, it turns out that curvature forms an obstruction to compare $v_n^n$ to a random walk with increments distributed according to $\mu_{x_0}$. Indeed, one expects to compare $v_n^n$ to the weighted random walk
$$
\frac{1}{\sqrt n}\sum_{l=1}^n \theta_l^n\tau_{x_0 \Ss_{l-1}^{\theta^n}}^{-1}X_l^n \in T_{x_0}M.
$$
Because the distributions $\mu_x$ of the increments are invariant under parallel transport, we indeed  have $\tau_{x_0 \Ss_{l-1}^{\theta^n}}^{-1}X_l^n \sim \mu_{x_0}$. Therefore, by Theorem \ref{thm:GKR_LDP_vector}, these random walks in $T_{x_0}M$ satisfy for $\sigma$-almost every $\theta$ a large deviation principle with rate function $\Psi_{x_0}^*$. One then hopes to use a contraction principle to also obtain a large deviations for $\{\Ss_n^{\theta^n}\}_n$. Unfortunately, from Proposition \ref{prop:RW_to_tangent_space} (by taking $l = n$) we see that the difference between $v_n^n$ and the proposed random walk in $T_{x_0}M$ is $\Oo(1)$. The connection to a weighted random walks in the tangent space therefore needs to be refined.\\

We solve both issues by introducing a parameter $m \in \NN$ and partitioning the random walk in $m$ parts, each consisting of (roughly) $\lfloor nm^{-1}\rfloor$ steps. More precisely, define $n_i = i\lfloor nm^{-1}\rfloor$ for $i = 0,1\ldots,m-1$ and set $n_m = n$. We have the following bound on how far our random walks can wander in $\lfloor nm^{-1}\rfloor$ steps.

\begin{lemma}\label{lem:distance_RW}
    Fix $\theta^n \in \bS^{n-1}$ and let $r$ be the uniform upper bound on the increments of $\Ss_k^{\theta^n}$. Then
    $$
    d\left(\Ss_{k}^{\theta^n},x_0\right) \leq \frac{r\sqrt k}{\sqrt n} \leq r
    $$
    for $k = 1,\ldots,n$. Similarly, if $n_i = i\lfloor nm^{-1}\rfloor$, we have
    $$
    d\left(\Ss_{n_i}^{\theta^n},\Ss_{n_i+k}^{\theta^n}\right) \leq \frac{r}{\sqrt m}
    $$
    for $1 \leq k \leq \lfloor nm^{-1}\rfloor$.
\end{lemma}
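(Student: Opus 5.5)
The plan is to bound each geodesic step by its length, and then combine the steps with the triangle inequality and Cauchy--Schwarz. By Definition \ref{def:scaled_GRW}, with $\alpha^n = \frac{1}{\sqrt n}\theta^n$, we have $\Ss_j^{\theta^n} = \Exp_{\Ss_{j-1}^{\theta^n}}(\alpha^n_j X_j)$. The curve $t \mapsto \Exp_{\Ss_{j-1}^{\theta^n}}(t\alpha_j^n X_j)$, $t\in[0,1]$, is a geodesic connecting $\Ss_{j-1}^{\theta^n}$ to $\Ss_j^{\theta^n}$. Geodesics have constant speed, so its length is $|\alpha^n_j X_j| = \frac{|\theta^n_j|}{\sqrt n}|X_j|$. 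Since the Riemannian distance is the infimum of lengths of connecting curves, and the increments satisfy $|X_j| \le r$, this gives
$$
d\left(\Ss_{j-1}^{\theta^n},\Ss_j^{\theta^n}\right) \le \frac{r|\theta_j^n|}{\sqrt n}.
$$

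For the first claim, I apply the triangle inequality along the path $x_0 = \Ss_0^{\theta^n},\Ss_1^{\theta^n},\ldots,\Ss_k^{\theta^n}$ and then Cauchy--Schwarz. Using $\theta^n\in\bS^{n-1}$, so that $\sum_{j=1}^k (\theta_j^n)^2 \le \sum_{j=1}^n (\theta_j^n)^2 = 1$, I get
$$
d\left(\Ss_k^{\theta^n},x_0\right) \le \frac{r}{\sqrt n}\sum_{j=1}^k |\theta_j^n| \le \frac{r}{\sqrt n}\sqrt{k}\Bigl(\sum_{j=1}^k (\theta_j^n)^2\Bigr)^{1/2} \le \frac{r\sqrt k}{\sqrt n}.
$$
The final inequality $\frac{r\sqrt k}{\sqrt n}\le r$ follows from $k \le n$.

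The second claim follows by the same argument applied to the steps $n_i+1,\ldots,n_i+k$. This yields
$$
d\left(\Ss_{n_i}^{\theta^n},\Ss_{n_i+k}^{\theta^n}\right) \le \frac{r}{\sqrt n}\sqrt k\Bigl(\sum_{j=n_i+1}^{n_i+k}(\theta_j^n)^2\Bigr)^{1/2} \le \frac{r\sqrt k}{\sqrt n}.
$$
Since $k \le \lfloor nm^{-1}\rfloor \le n/m$, we get $\sqrt{k/n} \le 1/\sqrt m$, which is the desired bound. Here it is implicitly required that $n_i + k \le n$ so that the walk is defined.

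There is no real obstacle in this argument. The only points needing care are two. First, the distance is bounded by the length of the particular geodesic $\Exp(t\,\cdot)$ rather than assuming it is minimizing; completeness guarantees this geodesic is defined. Second, the sphere constraint on $\theta^n$ is what converts the $\ell^1$-sum of the weights into the $\sqrt k$ factor.
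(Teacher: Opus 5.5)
Your proof is correct and follows essentially the same route as the paper: bound each step by the length $|\theta_j^n||X_j|/\sqrt n$ of the connecting geodesic, sum with the triangle inequality, use Cauchy--Schwarz with $\theta^n\in\bS^{n-1}$ to get the $\sqrt{k}$ factor, and use $k\le n/m$ for the second claim. Your Cauchy--Schwarz step $\sum_{j\le k}|\theta_j^n|\le\sqrt{k}\bigl(\sum_{j\le k}(\theta_j^n)^2\bigr)^{1/2}$ is in fact stated more carefully than the paper's display \eqref{eq:CS_coef}, which omits the square root.
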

\begin{proof}
    By the Cauchy-Schwarz inequality and the fact that $\theta^n \in \bS^{n-1}$, we have
    \begin{equation}\label{eq:CS_coef}
    \sum_{i=1}^{k} |\theta_i^n| \leq \sqrt k\sum_{i=1}^{k} |\theta_i^n|^2 \leq \sqrt k.
    \end{equation}
    The triangle inequality then gives us
    $$
    d\left(\Ss_{k}^{\theta^n},x_0\right) \leq \frac{1}{\sqrt n}\sum_{i=1}^{k} |\theta_i^n||X_i^n| \leq \frac{r\sqrt k}{\sqrt n}. 
    $$
    The second claim follows similarly by using that $k \leq \frac{n}{m}$.
\end{proof}

Lemma \ref{lem:distance_RW} shows that the distance between points in the same piece of length $\lfloor nm^{-1}\rfloor$ of the random walk decays with $m$. This allows us to refine the estimate in Proposition \ref{prop:RW_to_tangent_space}, and let $m$ tend to infinity in the end. Furthermore, Lemma \ref{lem:distance_RW} also resolves the issue of the non-invertibility of the Riemannian exponential map. To see this, for $x \in M$ we first define the \emph{injectivity radius}
$$
\iota(x) = \sup\{t > 0| \exp_x \mbox{ is injective on } B(0,t)\}.
$$ 
It turns out that $\iota$ is continuous on $M$ (see e.g. \cite{Kli82}). By Lemma \ref{lem:distance_RW}, the random walks $\{\Ss_l^{\theta^n}\}_{0\leq l \leq n}$ all remain in the set $K = \overline{B(x_0,r)}$, which is compact because $M$ is complete. As a consequence, $\iota$ attains a minimum $\iota_K > 0$ on $K$, meaning that $\exp_x$ is injective on $B(0,\iota_K) \subset T_xM$ for all $x \in K$. In particular, if we take $m$ large enough so that $\frac{r}{\sqrt m} < \iota_K$, by Lemma \ref{lem:distance_RW} we can uniquely define the random vectors
\begin{equation}\label{eq:pullback_vector_piece}
\tilde v_k^{n,m,i} \in \Exp_{\Ss_{n_{i-1}}^{\theta^n}}^{-1}\left(\Ss_{n_{i-1}+k}^{\theta^n}\right) \subset T_{\Ss_{n_{i-1}}^{\theta^n}}M
\end{equation}
of minimal length.

The following result shows how we can compare the random variables $v_l^n := \tilde v_l^{n,m,1}$ (which is independent of $m$, since $\Ss_{n_0} = x_0$) to random variables of the form
$$
\frac{1}{\sqrt n}\sum_{k=1}^l \theta_k^n\tau_{x_0 \Ss_{k-1}^{\theta^n}}^{-1}X_k^n.
$$
As discussed above, the latter is a random walk in $T_{x_0}M$ with increments distributed as $\mu_{x_0}$.

\begin{proposition}\label{prop:RW_to_tangent_space}
    Fix $n \in \NN$, $\theta^n \in \bS^{n-1}$ and let $r > 0$ be such that $|X_l^n| \leq r$ for all $1 \leq l \leq n$. Then there is a constant $C > 0$ such that
    $$
    \left|v_l^n - \frac{1}{\sqrt n}\sum_{k=1}^l \theta_k^n\tau_{x_0 \Ss_{k-1}^{\theta^n}}^{-1}X_k^n\right| \leq C\frac{1}{n}\sum_{k=1}^l (\theta_k^n)^2 + Cr^3\frac{l^{3/2}}{n^{3/2}}.
    $$
    for all $1 \leq l \leq n$ for which $\dd(\Ss_k^{\theta^n},x_0) < \iota(x_0)$ for all $1 \leq k \leq l-1$.
\end{proposition}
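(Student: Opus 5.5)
The plan is to telescope. Write $v_0^n = 0$, and for each $k \le l$ estimate the one-step error
$$
e_k := v_k^n - v_{k-1}^n - \tfrac{1}{\sqrt n}\theta_k^n \tau_{x_0\Ss_{k-1}^{\theta^n}}^{-1}X_k^n,
$$
so that the quantity in the statement is at most $\sum_{k=1}^l |e_k|$. By Lemma \ref{lem:distance_RW}, every point $\Ss_{k-1}^{\theta^n}$ with $k\le l$ lies in the compact set $K=\overline{B(x_0,r)}$. By hypothesis these points also lie inside the injectivity domain of $\exp_{x_0}$, with $|v_{k-1}^n| = d(\Ss_{k-1}^{\theta^n},x_0) \le r\sqrt{k-1}/\sqrt n$. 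All geometric constants (curvature bounds, bounds on derivatives of $\exp$ and its inverse) can therefore be taken uniform over a compact region. This uniformity is where $C$ comes from; $C$ depends only on $x_0$, $r$ and the geometry near $x_0$, not on $n$ or $\theta^n$.

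The key step is a second-order expansion of the map
$$
F(v,w) := \exp_{x_0}^{-1}\bigl(\exp_{\exp_{x_0}v}(\tau_{x_0\exp_{x_0}v}\,w)\bigr), \qquad v,w\in T_{x_0}M,
$$
which is smooth on the relevant compact region. Note that $v_k^n = F\bigl(v_{k-1}^n,\tfrac{1}{\sqrt n}\theta_k^n \tau^{-1}X_k^n\bigr)$. We have $F(v,0)=v$, and the derivative $\partial_w F(v,0) = (d\exp_{x_0})_v^{-1}\circ\tau_{x_0\exp_{x_0}v}$ equals $\mathrm{id}$ at $v=0$. Moreover $d\exp_{x_0}$ at $v$ differs from parallel transport along the radial geodesic only at order $|v|^2$, because the Jacobi field expansion has no linear curvature term. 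Hence $|\partial_w F(v,0) - \mathrm{id}| \le C|v|^2$. Taylor expansion in $w$ then gives
$$
|F(v,w) - v - w| \le C|v|^2|w| + C|w|^2 .
$$

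I expect the main obstacle to be the uniform quadratic-in-$|v|$ bound on $\partial_wF(v,0)-\mathrm{id}$. I would get it from the standard Jacobi field expansion $J(t)=tE - \tfrac{t^3}{6}R(E,v)v + O(t^4)$ in parallel frames along $t\mapsto\exp_{x_0}(tv)$. The cruder estimate $C|v||w|$ would lose the $3/2$ power and give only $l^{1/2}$-type errors, which are not good enough.

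Plugging in $w = \tfrac{1}{\sqrt n}\theta_k^n\tau^{-1}X_k^n$ gives $|w|\le r|\theta_k^n|/\sqrt n$ and $|v|^2\le r^2 k/n$. Hence
$$
|e_k| \le C\frac{r^2|\theta_k^n|^2}{n} + C r^3\frac{k}{n}\frac{|\theta_k^n|}{\sqrt n}.
$$
Summing over $k\le l$, the first term gives $\tfrac Cn\sum_{k\le l}(\theta_k^n)^2$, after absorbing $r^2$ into $C$. For the second term, bound $k\le l$ and use \eqref{eq:CS_coef} to get $\sum_{k\le l}|\theta_k^n|\le\sqrt l$. This yields $Cr^3 l^{3/2}/n^{3/2}$, which is exactly the claimed estimate.
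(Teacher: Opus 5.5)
Your proposal is correct and follows essentially the same route as the paper. You telescope over the steps, split each one-step error into a second-order remainder of size $C(\theta_k^n)^2/n$ and an $O(|v_{k-1}^n|^2)$ discrepancy between $(d\exp_{x_0})_{v_{k-1}^n}^{-1}$ and parallel transport, and sum using Lemma \ref{lem:distance_RW} and \eqref{eq:CS_coef}; the only difference is that the paper imports these two estimates from \cite[Proposition 5.4 and Corollary 5.8]{Ver19}, whereas you derive them from a Taylor expansion of your map $F$ and the Jacobi field expansion.
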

\begin{proof}
    From Lemma \ref{lem:distance_RW} we know that $\Ss_k^{\theta^n}$ remains inside the compact set $\overline{B(x_0,r)}$. Therefore, by \cite[Proposition 5.4]{Ver19}, there exists a constant $C > 0$ such that
    \begin{equation}\label{eq:induction_step}
    \left|v_{l+1}^n - \left(v_l^n + \frac{\theta_l^n}{\sqrt n}\dd(\Exp_{x_0})^{-1}_{v_l^n}X_{k+1}^n\right)\right| \leq  C\frac{(\theta_l^n)^2}{n}.
    \end{equation}

    To use this to prove the desired statement, we first apply the triangle inequality to obtain
    \begin{multline}
    \left|v_l^n - \frac1{\sqrt n}\sum_{k=1}^l \theta_k^n\tau_{x_0\Ss_{k-1}^{\theta^n}}^{-1}X_k^n\right| 
    \\
    \leq
    \left|v_l^n - \frac1{\sqrt n}\sum_{k=1}^l \theta_k^n\dd(\Exp_{x_0})_{v_{k-1}^n}^{-1}X_k^n\right| + \frac1{\sqrt n}\sum_{k=1}^l|\theta_k^n|\left|\dd(\Exp_{x_0})_{v_{k-1}^n}^{-1}X_k^n -  \tau_{x_0\Ss_{k-1}^{\theta^n}}^{-1}X_k^n\right|.
    \end{multline}

    We estimate both terms separately. By telescoping, we can use \eqref{eq:induction_step} to estimate the first term:
    \begin{equation} \label{eq:sum_differential}
    \begin{aligned} 
    \left|v_l^n - \frac1{\sqrt n}\sum_{k=1}^l \theta_k^n\dd(\Exp_{x_0})_{v_{k-1}^n}^{-1}X_k^n\right| 
    &\leq 
    \sum_{k=1}^l |v_k^n - v_{k-1}^n -  \theta_k^n\dd(\Exp_{x_0})_{v_{k-1}^n}^{-1}X_k^n| 
    \\
    &\leq
    C\frac 1n\sum_{k=1}^l (\theta_k^n)^2. 
    \end{aligned}
    \end{equation}

    For the second term, we apply \cite[Corollary 5.8]{Ver19} to obtain
    $$
    \frac1{\sqrt n}\sum_{k=1}^l|\theta_k^n|\left|\dd(\Exp_{x_0})_{v_{k-1}^n}^{-1}X_k^n -  \tau_{x_0\Ss_{k-1}^{\theta,n}}^{-1}X_k^n\right| 
    \leq 
    C\frac r{\sqrt n}\sum_{k=1}^l |\theta_k^n||v_{k-1}^n|^2
    \leq
    C\frac {r^3}{n^{3/2}}l\sum_{k=1}^l |\theta_k^n|
    \leq
    Cr^3\frac{l^{3/2}}{n^{3/2}}.
    $$
    Here we used that $|v_{k-1}^n| = d\left(\Ss_{k-1}^{\theta^n},x_0\right)$ together with Lemma \ref{lem:distance_RW} and \eqref{eq:CS_coef}. 
  
\end{proof}

\subsection{The upper bound of the large deviation principle}

To prove large deviation bounds for $\{\Ss_n^{\theta^n}\}_{n \in \NN}$, we first consider the random variables $\tilde v_{\lfloor m^{-1}n \rfloor}^{n,m,k}$ as defined in \eqref{eq:pullback_vector_piece}. However, these random variables still live in different tangent spaces, which depend on the trajectory of the random walk. Therefore, we first transport all these random tangent vectors back to $T_{x_0}M$.\\ 

For the upper bound of the large deviation principle, the only relevant property is that the transport of $\tilde v_{\lfloor m^{-1}n \rfloor}^{n,m,i}$ is measurable with respect to $\sigma (\Ss_l^{\theta^n}, 0 \leq l \leq n_{i-1})$. Therefore, we choose to carry out the parallel transport via the intermediate points $\Ss_{n_1}^{\theta^n},\ldots,\Ss_{n_{i-1}}^{\theta^n}$ of the random walk. For $m$ large enough, consecutive points of this form can be connected with a unique shortest geodesic, and we denote by $\tau_{\Ss_{n_{j-1}}^{\theta^n}\Ss_{n_j}^{\theta^n}}$ parallel transport along this geodesic. Using this notation, we define parallel transport $\tau_{RW,i}:T_{x_0}M \to T_{\Ss_{n_i}^{\theta^n}}M$ by 
\begin{equation}\label{eq:parallel_transport_RW}
\tau_{RW,i} = \tau_{\Ss_{n_{i-1}}^{\theta^n}\Ss_{n_i}^{\theta^n}}\circ\tau_{\Ss_{n_{i-2}}^{\theta^n}\Ss_{n_{i-1}}^{\theta^n}}\circ\cdots\circ\tau_{x_0\Ss_{n_1}^{\theta^n}}
\end{equation}
We then define
\begin{equation}\label{eq:tangent_RW_base_upper}
    v_{\lfloor m^{-1}n\rfloor}^{n,m,i} = \tau_{RW,i-1}^{-1}\tilde v_{\lfloor m^{-1}n\rfloor}^{n,m,i}.
\end{equation}

The aim is now to derive the upper bound of the large deviation principle for $\{\Ss_n^{\theta^n}\}_{n \in \NN}$ by studying the large deviations upper bound for the random variables
\begin{equation}
\left(v_{\lfloor m^{-1}n \rfloor}^{n,m,1},\ldots,v_{\lfloor m^{-1}n \rfloor}^{n,m,m}\right) \in (T_{x_0}M)^m. 
\end{equation}
With \cite[Theorem 4.5.3]{DZ98} in mind, we first need to understand
$$
\limsup_{n\to\infty} \frac1n\log\EE\left(e^{n\sum_{i=1}^m \inp{\lambda_i}{v_{\lfloor m^{-1}n\rfloor}^{n,m,i}}}\right).
$$
Unfortunately, it cannot be computed exactly. Instead, we compare $v_{\lfloor m^{-1}n\rfloor}^{n,m,i}$ to sums of random variables given by
\begin{equation}\label{eq:Y_upper}
Y_i^n = \tau_{RW,i-1}^{-1}\sum_{k=n_{i-1} + 1}^{n_i} \theta_k^n\tau_{\Ss_{n_{i-1}}^{\theta^n}\Ss_{k-1}^{\theta^n}}^{-1}X_k^n \in T_{x_0}M.
\end{equation}
Computing the moment generating function of sums of independent, identically distributed random variables is straightforward. However, the weights $\theta^n$ make the sums in $Y_i^n$ inhomogeneous. Therefore, contrary to the work on Cram\'er's theorem for geodesic random walks in \cite{Ver19}, we now have to use of Varadhan's Lemma, more precisely Corollary \ref{cor:Varadhan_limit_mgf}, to obtain asymptotics of the moment generating functions of the $Y_i^n$.

\begin{proposition}\label{prop:upperbound_MGF}
    Let the assumptions of Theorem \ref{thm:weighted_GRW_LDP} be satisfied. Denote by $r$ the uniform bound on the increments of the geodesic random walk. Consider the random variables $\left(v_{\lfloor m^{-1}n \rfloor}^{n,m,1},\ldots,v_{\lfloor m^{-1}n \rfloor}^{n,m,m}\right)$ defined in \eqref{eq:tangent_RW_base_upper}. Then there exists a constant $C > 0$ such that for $m$ large enough and all $(\lambda_1,\ldots,\lambda_k)\in (T_{x_0}M)^m$ we have
    $$
    \limsup_{n\to\infty} \frac1n\log\EE\left(e^{n\sum_{i=1}^m \inp{\lambda_i}{v_{\lfloor m^{-1}n\rfloor}^{n,m,i}}}\right) \leq \frac1m\sum_{i=1}^m \Psi_{x_0}(\lambda_i) + \frac{Cr^3}{m^{3/2}}\sum_{i=1}^m |\lambda_i|
    $$
    for $\sigma$-almost every $\theta$.
\end{proposition}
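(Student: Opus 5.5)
The argument has three steps: compare each $v^{n,m,i}$ with $Y_i^n/\sqrt n$ pathwise, peel off the blocks one at a time by conditioning, and evaluate each block with Corollary \ref{cor:Varadhan_limit_mgf}.

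\textbf{Step 1: pathwise comparison.} Since $\tau_{RW,i-1}$ is an isometry, applying Proposition \ref{prop:RW_to_tangent_space} to the walk restarted at $\Ss_{n_{i-1}}^{\theta^n}$ gives
$$
\left|v_{\lfloor m^{-1}n\rfloor}^{n,m,i} - \tfrac{1}{\sqrt n}Y_i^n\right| \leq \frac Cn\sum_{k=n_{i-1}+1}^{n_i}(\theta_k^n)^2 + Cr^3\frac{\lfloor n/m\rfloor^{3/2}}{n^{3/2}}.
$$
This needs $m$ large enough that $r/\sqrt m<\iota_K$ (Lemma \ref{lem:distance_RW}). The constant $C$ is uniform because every point stays in the compact set $K=\overline{B(x_0,r)}$. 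The first term is at most $C/n$ because $\theta^n\in\bS^{n-1}$, and the second is at most $Cr^3m^{-3/2}$. Hence
$$
\EE\left(e^{n\sum_i\inp{\lambda_i}{v^{n,m,i}}}\right)\leq e^{C\sum_i|\lambda_i|+nCr^3m^{-3/2}\sum_i|\lambda_i|}\,\EE\left(e^{\sqrt n\sum_i\inp{\lambda_i}{Y_i^n}}\right).
$$
After taking $\frac1n\log$, the first factor produces exactly the error term in the statement.

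\textbf{Step 2: conditioning on past blocks.} Let $\mathcal F_j=\sigma(\Ss_l^{\theta^n},\,l\le n_j)$. The operator $\tau_{RW,i-1}$ is $\mathcal F_{i-1}$-measurable, so
$$
\inp{\lambda_i}{Y_i^n}=\sum_{k=n_{i-1}+1}^{n_i}\theta_k^n\inp{\tilde\lambda_i}{\tau^{-1}_{\Ss_{n_{i-1}}\Ss_{k-1}}X_k^n},\qquad \tilde\lambda_i=\tau_{RW,i-1}\lambda_i .
$$
Inside block $i$ I condition successively on $\sigma(\Ss_l,\,l\le k-1)$. By invariance of the increment laws under parallel transport, the conditional law of $\tau^{-1}_{\Ss_{n_{i-1}}\Ss_{k-1}}X_k^n$ is $\mu_{\Ss_{n_{i-1}}}$, and this is $\mu_{x_0}\circ\tau_{RW,i-1}^{-1}$ by the same invariance. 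Each conditional factor is therefore $\exp\bigl(\Lambda_{x_0}(\sqrt n\,\theta_k^n\lambda_i)\bigr)$, which is deterministic. Iterating over $k$ and then over $i$, the expectation factorizes exactly:
$$
\EE\left(e^{\sqrt n\sum_i\inp{\lambda_i}{Y_i^n}}\right)=\prod_{i=1}^m\EE\left(e^{n\inp{\lambda_i}{W_{n_i}^{\theta^n}-W_{n_{i-1}}^{\theta^n}}}\right),
$$
where $W$ is the weighted random walk \eqref{eq:weighted_RW_vector_V} with i.i.d.\ $\mu_{x_0}$ increments in $T_{x_0}M$.

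\textbf{Step 3: evaluating each block.} By Corollary \ref{cor:Varadhan_limit_mgf} applied with $k=m$ (more precisely its proof, which gives the same limit for every block $n_{i-1}\to n_i$ by permutation invariance of $\sigma$), we have $\frac1n\log$ of factor $i$ $\to\frac1m\Psi_{x_0}(\lambda_i)$ for $\sigma$-a.e.\ $\theta$. The final block has length $n-(m-1)\lfloor n/m\rfloor$. This differs from $\lfloor n/m\rfloor$ by fewer than $m$ steps, and bounded increments make that difference negligible at exponential scale.

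\textbf{Main obstacle.} The null set must be uniform in $\lambda$. I would first obtain the limit on a countable dense set of $\lambda$'s, which involves only countably many null sets. I would then extend to all $\lambda$ using convexity of $\frac1n\log\EE$ and boundedness of the increments, which gives equi-Lipschitz control in $\lambda$ with constant $\le r$ after scaling.
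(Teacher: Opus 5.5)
Your proposal is correct and follows the paper's proof: you compare each $v^{n,m,i}_{\lfloor m^{-1}n\rfloor}$ pathwise with $Y_i^n/\sqrt n$ via Proposition \ref{prop:RW_to_tangent_space}, factorize the moment generating function of the $Y_i^n$ using parallel-transport invariance, and apply Corollary \ref{cor:Varadhan_limit_mgf} blockwise. Your iterated conditioning (which justifies the independence the paper simply asserts), your handling of the slightly longer final block, and your countable-dense-set argument making the null set uniform in $\lambda$ are careful refinements of steps the paper passes over quickly, not a different route.
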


\begin{proof}
    Consider the random variables $Y_i^n$ as defined in \eqref{eq:Y_upper}. By the Cauchy-Schwarz and triangle inequality, and the fact that parallel transport is an isometry, we have
    $$
    \left|\sum_{i=1}^m \inp{\lambda_i}{v_{\lfloor m^{-1}n\rfloor}^{n,m,i}} - \frac1{\sqrt n}\sum_{i=1}^m \inp{\lambda_i}{Y_i^n}\right| 
    \leq \sum_{i=1}^m |\lambda_i|\left|\tilde v_{\lfloor m^{-1}n\rfloor}^{n,m,i} - \frac1{\sqrt n}\sum_{k=n_{i-1}+1}^{n_i} \theta_k^n\tau_{\Ss_{n_{i-1}}^{\theta^n}\Ss_k^{\theta^n}}^{-1}X_k^n\right|.
    $$

    We further estimate this using Proposition \ref{prop:RW_to_tangent_space} to obtain
    \begin{align*}
    \left|\sum_{i=1}^m \inp{\lambda_i}{v_{\lfloor m^{-1}n\rfloor}^{n,m,i}} - \frac1{\sqrt n}\sum_{i=1}^m \inp{\lambda_i}{Y_i^n}\right| 
    &\leq 
    C\sum_{i=1}^m \left(\frac{1}{n}|\lambda_i|\sum_{k=n_{i-1}+1}^{n_i} (\theta_k^n)^2 + |\lambda_i|r^3\frac{(n_i - n_{i-1})^{3/2}}{n^{3/2}}\right)
    \\
    &\leq
    C\max_i|\lambda_i|\frac1n\sum_{i=1}^m \sum_{k=n_{i-1}+1}^{n_i} (\theta_k^n)^2 + \frac{Cr^3}{m^{3/2}}\sum_{i=1}^m |\lambda_i|
    \\
    &=
    C\max_i|\lambda_i|\frac1n + \frac{Cr^3}{m^{3/2}}\sum_{i=1}^m |\lambda_i|.
    \end{align*}
    Here we used that $n_i - n_{i-1} = \lfloor nm^{-1}\rfloor \leq nm^{-1}$ and
    $$
    \sum_{i=1}^m \sum_{k=n_{i-1}+1}^{n_i} (\theta_k^n)^2 = \sum_{k=1}^n (\theta_k^n)^2 = 1,
    $$
    because $\theta^n \in \bS^{n-1}$.\\
    
    Applying this estimate to the moment generating function, we obtain
    $$
    \EE\left(e^{n\sum_{i=1}^m\inp{\lambda_i}{v_{\lfloor m^{-1}n\rfloor}^{n,m,i}}}\right)
    \leq 
    e^{C\max_i|\lambda_i|}e^{Cr^3nm^{-3/2}\sum_{i=1}^m|\lambda_i|}\EE\left(e^{\sqrt n\sum_{i=1}^m \inp{\lambda_i}{Y_i^n}}\right).
    $$
    It follows that
    \begin{align*}
    \limsup_{n\to\infty} \frac1n\log\EE\left(e^{n\sum_{i=1}^m \inp{\lambda}{v_{\lfloor m^{-1}n\rfloor}^{n,m,i}}}\right) 
    &\leq 
    \limsup_{n\to\infty} \frac{C\max_i|\lambda_i|}{n} + \frac{Cr^3}{m^{3/2}}\sum_{i=1}^m |\lambda_i| + \frac1n\log \EE\left(e^{\sqrt n\sum_{i=1}^m \inp{\lambda_i}{Y_i^n}}\right)
    \\
    &=
    \frac{Cr^3}{m^{3/2}}\sum_{i=1}^m |\lambda_i| + \limsup_{n\to\infty}\frac1n\log \EE\left(e^{\sqrt n\sum_{i=1}^m \inp{\lambda_i}{Y_i^n}}\right)
    \end{align*}

    We now compute the asymptotics for the remaining moment generating function. Because the increments of the geodesic random walk are parallel transport invariant and independent, the random variables $\tau_{RW,i-1}^{-1}\tau_{\Ss_{n_{i-1}}^{\theta^n}\Ss_{k-1}^{\theta^n}}^{-1}X_k^n$ are independent with distribution $\mu_{x_0}$. This implies that $Y_1^n,\ldots,Y_m^n$ are independent, and hence 
    $$
    \EE\left(e^{\sqrt n\sum_{i=1}^m \inp{\lambda_i}{Y_i^n}}\right) = \prod_{i=1}^m \EE\left(e^{\sqrt n \inp{\lambda_i}{Y_i^n}}\right).
    $$
    Moreover, by applying Corollary \ref{cor:Varadhan_limit_mgf} to the random variables $\frac{1}{\sqrt n}Y_i^n$, we find that for $\sigma$-almost every $\theta$ and all $i = 1,\ldots,m$ we have
    $$
    \lim_{n\to\infty} \frac1n\log\EE\left(e^{\sqrt n \inp{\lambda_i}{Y_i^n}}\right) = \frac{1}{m}\Psi_{x_0}(\lambda_i),
    $$
    which completes the proof.
\end{proof}

We can now prove a large deviations upper bound for $\left(v_{\lfloor m^{-1}n \rfloor}^{n,m,1},\ldots,v_{\lfloor m^{-1}n \rfloor}^{n,m,m}\right)$.

\begin{proposition}\label{prop:upper_bound_LDP_tangent}
Let the assumptions of Theorem \ref{thm:weighted_GRW_LDP} be satisfied. Denote by $r$ the uniform bound on the increments of the geodesic random walk. Then for $m$ large enough and any closed $F \subset (T_{x_0}M)^m$ we have
\begin{multline}
\limsup_{n\to\infty} \frac1n\log\PP\left(\left(v_{\lfloor m^{-1}n \rfloor}^{n,m,1},\ldots,v_{\lfloor m^{-1}n \rfloor}^{n,m,m}\right) \in F\right)
\\
\leq 
-\inf_{(v_1,\ldots,v_m) \in F} \sup_{(\lambda_1,\ldots,\lambda_m) \in (T_{x_0}M)^m} \frac1m\sum_{i=1}^m \left\{\inp{\lambda_i}{mv_i} - \Psi_{x_0}(\lambda_i) - m^{-\frac12}C|\lambda_i|r^3\right\}.
\end{multline}
Here, $C$ is a constant depending on the curvature of the compact set $\overline{B(0,r)}$ and the bound $r$.
\end{proposition}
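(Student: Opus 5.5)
The approach is the standard upper-bound argument behind the Gärtner--Ellis theorem, \cite[Theorem 4.5.3]{DZ98}, applied to the vector $V^n := \left(v_{\lfloor m^{-1}n \rfloor}^{n,m,1},\ldots,v_{\lfloor m^{-1}n \rfloor}^{n,m,m}\right)$ in the finite-dimensional space $(T_{x_0}M)^m$. The only input is the bound on the scaled cumulant generating function from Proposition \ref{prop:upperbound_MGF}. Write
$$
\bar\Lambda(\lambda) := \frac1m\sum_{i=1}^m \Psi_{x_0}(\lambda_i) + \frac{Cr^3}{m^{3/2}}\sum_{i=1}^m|\lambda_i| .
$$
We have $\limsup_n \frac1n\log\EE(e^{n\inp{\lambda}{V^n}}) \le \bar\Lambda(\lambda)$ for all $\lambda$. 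The target rate function is exactly
$$
\sup_\lambda\{\inp{\lambda}{v}-\bar\Lambda(\lambda)\} = \sup_\lambda \frac1m\sum_i\{\inp{\lambda_i}{mv_i} - \Psi_{x_0}(\lambda_i) - m^{-1/2}Cr^3|\lambda_i|\},
$$
so it suffices to show that the upper bound holds with the Legendre transform $\bar\Lambda^*$ of this upper bound. We do not need $\bar\Lambda$ to be the actual limit.

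One technical point: Proposition \ref{prop:upperbound_MGF} holds for $\sigma$-a.e. $\theta$, but the null set may depend on $\lambda$. I would fix a countable dense set $D\subset (T_{x_0}M)^m$ and take the intersection of the full-measure sets over $D$. The inequality then extends to all $\lambda$. The reason is that $\lambda\mapsto \frac1n\log\EE(e^{n\inp{\lambda}{V^n}})$ is Lipschitz uniformly in $n$, since $|V^n|$ is bounded by $r\sqrt m$ via Lemma \ref{lem:distance_RW}, and $\bar\Lambda$ is continuous, since $\Psi_{x_0}$ is finite and continuous for bounded increments.

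The steps follow the usual proof. First, for a compact $K$ and $\delta>0$, cover $K$ by finitely many balls $B(v,\rho_v)$. For each $v$ choose $\lambda^v$ with $\inp{\lambda^v}{v}-\bar\Lambda(\lambda^v)\ge \min(\bar\Lambda^*(v)-\delta,1/\delta)$, and choose $\rho_v$ with $\rho_v|\lambda^v|\le\delta$. The exponential Chebyshev inequality gives
$$
\PP(V^n\in B(v,\rho_v))\le e^{-n\inp{\lambda^v}{v}+n\delta}\,\EE(e^{n\inp{\lambda^v}{V^n}}),
$$
and taking $\limsup\frac1n\log$ yields the bound $-\min(\bar\Lambda^*(v)-\delta,1/\delta)+\delta$. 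Summing over finitely many balls and letting $\delta\to0$ gives the upper bound on compacts. Second, extend from compacts to closed sets using exponential tightness. This is trivial here, because $V^n$ lies almost surely in the compact set $\overline{B(0,r)}^m$ by Lemma \ref{lem:distance_RW}: each component has norm at most $r/\sqrt m$ and parallel transport is an isometry. So for closed $F$ we can replace $F$ by $F\cap \overline{B(0,r)}^m$, which is compact.

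The main obstacle is the $\lambda$-dependence of the $\sigma$-null set, handled above by density and Lipschitz continuity. A second subtlety is that Proposition \ref{prop:upperbound_MGF} requires $m$ large, namely $r/\sqrt m<\iota_K$, so that the $v^{n,m,i}$ are well defined. This is where ``$m$ large enough'' in the statement enters. Everything else is routine.
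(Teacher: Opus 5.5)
Your proposal is correct and follows the same route as the paper. Both arguments reduce to compact sets via the almost-sure boundedness of the vector coming from Lemma \ref{lem:distance_RW}, then apply the G\"artner--Ellis upper bound \cite[Theorem 4.5.3]{DZ98} with the $\limsup$ of the scaled log-moment generating function replaced by the bound from Proposition \ref{prop:upperbound_MGF}. The only differences are that you re-derive the Chebyshev/covering argument by hand, and that you make explicit, via the countable dense set and the uniform Lipschitz estimate, why a single $\sigma$-null set works for all $\lambda$; the paper leaves that step implicit.
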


\begin{proof}
    From Lemma \ref{lem:distance_RW} it follows that the random walks stay in the compact set $\overline{B(0,r)}$, so that it suffices to prove the statement for compact sets $\Gamma$. By \cite[Theorem 4.5.3]{DZ98} we have
    
    \begin{align*}
    \MoveEqLeft\limsup_{n\to\infty} \frac1n\log\PP\left(\left(v_{\lfloor m^{-1}n \rfloor}^{n,m,1},\ldots,v_{\lfloor m^{-1}n \rfloor}^{n,m,m}\right) \in \Gamma\right)
    \\
    &\leq 
    -\inf_{(v_1,\ldots,v_m) \in \Gamma} \sup_{(\lambda_1,\ldots,\lambda_m) \in (T_{x_0}M)^m} \left\{\sum_{i=1}^m \inp{\lambda_i}{v_i}
    - \limsup_{n\to\infty} \frac1n\log\EE\left(e^{n\sum_{i=1}^m \inp{\lambda_i}{v_{\lfloor m^{-1}n\rfloor}^{n,m,i}}}\right)\right\}.
    \end{align*}

    Together with Proposition \ref{prop:upperbound_MGF} this gives us that for $\sigma$-almost all $\theta$ we have

    \newpage
    
    \begin{align*}
    \MoveEqLeft\limsup_{n\to\infty} \frac1n\log\PP\left(\left(v_{\lfloor m^{-1}n \rfloor}^{n,m,1},\ldots,v_{\lfloor m^{-1}n \rfloor}^{n,m,m}\right) \in \Gamma\right)
    \\
    &\leq 
    -\inf_{(v_1,\ldots,v_m) \in \Gamma} \sup_{(\lambda_1,\ldots,\lambda_m) \in (T_{x_0}M)^m} \left\{\sum_{i=1}^m \inp{\lambda_i}{v_i}
    - \frac1m\sum_{i=1}^m \Psi_{x_0}(\lambda_i) - \frac{Cr^3}{m^{3/2}}\sum_{i=1}^m |\lambda_i|\right\}
    \\
    &= 
    -\inf_{(v_1,\ldots,v_m) \in \Gamma} \sup_{(\lambda_1,\ldots,\lambda_m) \in (T_{x_0}M)^m} \frac1m\sum_{i=1}^m \left\{\inp{\lambda_i}{mv_i}
    - \sum_{i=1}^m \Psi_{x_0}(\lambda_i) - \frac{Cr^3}{m^{1/2}}\sum_{i=1}^m |\lambda_i|\right\}
    \end{align*}
    as desired.
\end{proof}

We are now ready to derive the upper bound of the large deviation principle for $\{\Ss_n^{\theta^n}\}_{n \in \NN}$. For this, we need a suitable map that maps $\left(v_{\lfloor m^{-1}n \rfloor}^{n,m,1},\ldots,v_{\lfloor m^{-1}n \rfloor}^{n,m,m}\right)$ to $\Ss_n^{\theta^n}$. Intuitively, we construct a piecewise geodesic path with the given tangent vectors as directions, which we parallel transport along the constructed path. More precisely, we introduce the map $\Tt_m:(T_{x_0}M)^m \to M$ that constructs this piecewise geodesic path $\gamma:[0,1] \to M$ recursively as follows. Set $\gamma(0) = x_0$ and suppose $\gamma$ has been defined on $\left[0,\frac im\right]$. Then for $t \in \left[\frac im,\frac {(i+1)}m\right]$ we define $\gamma(t) = \Exp_{\gamma(\frac im)}\left(\left(t - \frac im\right)\tau_{\gamma;x_0\gamma(\frac im)}v_{i+1}\right)$. Finally, we set
    \begin{equation}\label{eq:tangent_to_M_upper}
        \Tt_m(v_1,\ldots,v_m) = \gamma(1).
    \end{equation}

Having Proposition \ref{prop:upper_bound_LDP_tangent}, the proof of the large deviations upper bound for $\{\Ss_n^{\theta^n}\}_{n \in \NN}$ is analogous to the proof of \cite[Proposition 6.9]{Ver19}. We provide a condensed version of the proof, emphasizing the adaptations that need to be made for the proof to be valid in our current setting.

\begin{proposition}\label{prop:upper_bound_LDP_theorem}
    Let the assumptions of Theorem \ref{thm:weighted_GRW_LDP} be satisfied. Then for $\sigma$-almost every $\theta$ and every $F \subset M$ closed we have
    $$
    \limsup_{n\to\infty} \frac1n\log \PP\left(\Ss_n^{\theta^n} \in F\right) \leq -\inf_{x\in F} I_M(x),
    $$
    where
    $$
    I_M(x) = \inf\{\Psi_{x_0}^*(v)|v \in \Exp_{x_0}^{-1}x\}.
    $$
\end{proposition}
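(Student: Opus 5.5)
We will follow the structure of \cite[Proposition 6.9]{Ver19}, transferring the tangent-space upper bound of Proposition \ref{prop:upper_bound_LDP_tangent} to $M$ through the map $\Tt_m$ from \eqref{eq:tangent_to_M_upper}. Throughout, fix $\theta$ in the intersection over $m\in\NN$ (and over a countable dense set of $\lambda$'s, if needed) of the $\sigma$-full sets given by Proposition \ref{prop:upper_bound_LDP_tangent}. This intersection still has full $\sigma$-measure.

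\textbf{Step 1: relating $\Ss_n^{\theta^n}$ to $\Tt_m$.} By construction, $\Ss_{n_i}^{\theta^n} = \Exp_{\Ss_{n_{i-1}}^{\theta^n}}(\tilde v^{n,m,i}_{\lfloor m^{-1}n\rfloor})$ for $i<m$, with $\tilde v^{n,m,i} = \tau_{RW,i-1} v^{n,m,i}$. The path built by $\Tt_m$ uses parallel transport along the piecewise geodesic itself. The random walk instead transports along the shortest geodesics between the $\Ss_{n_j}$, and these are exactly the pieces of that same path. Hence $\Ss_{n_{m-1}}^{\theta^n} = \Tt_m$ evaluated at the first $m-1$ entries, with the last piece $m\,v^{n,m,m}$ rescaled appropriately. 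The last block has length $n-(m-1)\lfloor n/m\rfloor$, which is between $\lfloor n/m\rfloor$ and $\lfloor n/m\rfloor+m$. Its contribution differs from a standard block by at most $m r/\sqrt n\to0$, so this discrepancy is negligible.

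Because the scaling in Proposition \ref{prop:upper_bound_LDP_tangent} carries a factor $mv_i$, we apply $\Tt_m$ with each geodesic piece run over time $1/m$ with velocity $m\tau v_i$. The result is $\Ss_n^{\theta^n} = \Tt_m(m v^{n,m,1},\dots,m v^{n,m,m})$ up to the small last-block error. The map $\Tt_m$ is continuous, so $F_m := \{(v_i): \Tt_m(mv_1,\dots,mv_m)\in F\}$ is closed. Proposition \ref{prop:upper_bound_LDP_tangent} then gives
\[
\limsup_{n\to\infty}\frac1n\log\PP(\Ss_n^{\theta^n}\in F)\le -\inf_{(w_i)\in \Tt_m^{-1}F}\ \frac1m\sum_{i=1}^m \bigl(\Psi_{x_0}^{*}\bigr)_{m}(w_i),
\]
where $(\Psi^*_{x_0})_m(w)=\sup_\lambda\{\ip{\lambda}{w}-\Psi_{x_0}(\lambda)-Cr^3m^{-1/2}|\lambda|\}$. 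Bounded increments imply $|w_i|\le r$, so we can restrict to a compact set. We can also handle the last-block error by replacing $F$ with its closed $\delta$-neighbourhood and letting $\delta\downarrow0$ at the end.

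\textbf{Step 2: removing $m$.} Note that $(\Psi^*_{x_0})_m(w)\ge \Psi^*_{x_0}(w')$ for some $w'$ with $|w-w'|\le Cr^3m^{-1/2}$. This holds because the perturbed function is the inf-convolution of $\Psi^*_{x_0}$ with the indicator of a ball of radius $Cr^3m^{-1/2}$. By convexity of $\Psi^*_{x_0}$, $\frac1m\sum_i\Psi^*_{x_0}(w'_i)\ge\Psi^*_{x_0}(\bar w)$ with $\bar w=\frac1m\sum_i w_i'$.

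It then remains to compare $\Tt_m(w_1,\dots,w_m)$ with $\Exp_{x_0}(\frac1m\sum_i w_i)$. We use the curvature estimates of \cite[Section 5]{Ver19}, the same ones used in Proposition \ref{prop:RW_to_tangent_space}, applied to the $m$ steps of size $r/m$ in place of $n$ steps. These give $d\bigl(\Tt_m(w),\Exp_{x_0}\bar w\bigr)\le C r^3/m + Cr^2/m$, which tends to $0$. The only change from \cite{Ver19} is that the weighted increments have been absorbed into the $v^{n,m,i}$. Consequently, every $(w_i)\in\Tt_m^{-1}F$ yields a $v=\bar w$ with $\Exp_{x_0} v\in F^{\epsilon_m}$ (the $\epsilon_m$-neighbourhood, $\epsilon_m\to0$) and $\frac1m\sum(\Psi^*_{x_0})_m(w_i)\ge\Psi^*_{x_0}(v)$.

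\textbf{Step 3: limit.} We obtain the bound $\limsup\le-\inf_{x\in F^{\epsilon_m}}I_M(x)$ for every large $m$. Since $\Psi^*_{x_0}$ is a good rate function and $\Exp_{x_0}$ is continuous, $I_M$ is lower semicontinuous with compact level sets. Hence $\inf_{F^{\epsilon}}I_M\to\inf_F I_M$ as $\epsilon\downarrow0$, and letting $m\to\infty$ concludes the proof.

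The main obstacle is the uniform comparison in Step 2 between $\Tt_m$ and $\Exp_{x_0}$ of the average. It has to hold uniformly over the compact set $\{|w_i|\le r\}$ with an error vanishing in $m$. For this I would invoke \cite[Proposition 5.4, Corollary 5.8]{Ver19} exactly as in the proof of Proposition \ref{prop:RW_to_tangent_space}, using that these constants depend only on curvature bounds on $\overline{B(x_0,r)}$.
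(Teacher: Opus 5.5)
\textbf{There is a genuine gap in Step 2.} Your Step 2 rests on the estimate $d\bigl(\Tt_m(w_1,\dots,w_m),\Exp_{x_0}\bar w\bigr)\le Cr^3/m+Cr^2/m$, uniformly over $|w_i|\le r$. This estimate is false, and it is exactly the kind of comparison the $m$-splitting is designed to avoid.

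Run the estimates from the proof of Proposition~\ref{prop:RW_to_tangent_space} on the $m$ steps $w_i/m$, i.e.\ with $l=n=m$ and $\theta_k=m^{-1/2}$. The Taylor term from \cite[Proposition 5.4]{Ver19} is indeed $O(1/m)$. The curvature term from \cite[Corollary 5.8]{Ver19}, however, is $C\frac1m\sum_i |w_i|\,|u_{i-1}|^2$, where $|u_{i-1}|\le r$ is the distance of the $(i-1)$-st corner to $x_0$. This term is of order $r^3$ and does not decay in $m$. It is the $\Oo(1)$ discrepancy the paper points out just before Lemma~\ref{lem:distance_RW}.

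Nor is it an artefact of the bound. Take the unit sphere $\bS^2$, orthonormal $e_1,e_2\in T_{x_0}\bS^2$, $m\in 3\NN$, $a=3\pi/2$, and let $w_i=ae_1$, $ae_2$, $-ae_1$ on the three consecutive thirds of the indices. The path built by $\Tt_m$ is then the geodesic triangle with three right angles and returns to $x_0$. On the other hand $\bar w=\frac a3e_2$, so $d(x_0,\Exp_{x_0}\bar w)=\pi/2$ for every such $m$ (shrink the sphere to fit $|w_i|\le r$). The endpoint of a developed path depends on its shape (holonomy), not only on the average of its increments. Your chain therefore only gives $\limsup\le-\inf_{F^\eta}I_M$ with a fixed $\eta$ of order $r^3$, which Step~3 cannot remove.

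\textbf{What the paper does instead.} The paper never compares $\Tt_m(w)$ with $\Exp_{x_0}$ of an average. After using $|\lambda|\le|\lambda|^2+1$, it invokes the argument of \cite[Proposition 6.9]{Ver19} to identify the infimum over $\Tt_m^{-1}F$ of the averaged perturbed cost \emph{exactly} with $\inf_{v\in\Exp_{x_0}^{-1}F}\sup_\lambda\{\ip{\lambda}{v}-\Psi_{x_0}(\lambda)-m^{-1/2}Cr^3|\lambda|^2\}$. No geometric error enters. Only then does it let $m\to\infty$ in this one-vector problem, using monotone convergence on a ball and noting that both sides are infinite for large $|v|$.

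\textbf{How to repair your argument.} What is missing is a comparison that is exact for the cost rather than approximate for the position. For instance, suppose $\Psi^*_{x_0}(w)=\ell(|w|)$ is radial. This happens when the holonomy group is $SO(d)$ with $d\ge 2$, since identical distributions must be invariant under parallel transport around loops. Then your inf-convolution equals $\ell\bigl((|w|-\epsilon)_+\bigr)$ with $\epsilon=Cr^3m^{-1/2}$, which is still convex and nondecreasing in $|w|$. Let $v$ be the initial velocity of a minimizing geodesic from $x_0$ to $\Tt_m(w)$. Then $|v|=d(x_0,\Tt_m(w))\le\frac1m\sum_i|w_i|$, the length of the $\Tt_m$-path. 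By Jensen and monotonicity, $\frac1m\sum_i(\Psi^*_{x_0})_m(w_i)\ge(\Psi^*_{x_0})_m(v)$, with no error term.

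\textbf{A smaller point.} The restriction to $|w_i|\le r$ does not follow from bounded increments. The infimum runs over all of $\Tt_m^{-1}F$, and realizations only satisfy $|mv^{n,m,i}|\le r\sqrt m$. The restriction should instead come from $\Psi_{x_0}(\lambda)\le r\EE(|Z|)|\lambda|$. This makes the perturbed conjugate infinite outside a ball of radius $r\EE(|Z|)+Cr^3m^{-1/2}$.
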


\begin{proof}
From Proposition \ref{prop:upper_bound_LDP_tangent} and the inequality $|\lambda| \leq |\lambda|^2 + 1$, we obtain
\begin{align*}
\MoveEqLeft\limsup_{n\to\infty} \frac1n\log\PP\left(\Ss_n^{\theta^n} \in F\right)
\\
&\leq 
\frac{Cr^3}{\sqrt m} - \inf_{(v_1,\ldots,v_m) \in \Tt_m^{-1}F} \sup_{(\lambda_1,\ldots,\lambda_m) \in (T_{x_0}M)^m} \frac1m\sum_{i=1}^m \left\{\inp{\lambda_i}{mv_i} - \Psi_{x_0}(\lambda_i) - m^{-1/2}Cr^3|\lambda_i|^2\right\},
\end{align*}

where $\Tt_m$ is as in \eqref{eq:tangent_to_M_upper}. Because $\Lambda_{x_0}$ is differentiable, convex and non-negative, so is $\Psi_{x_0}$. Hence, we can follow the proof of \cite[Proposition 6.9]{Ver19} to get
\begin{multline}
\inf_{(v_1,\ldots,v_m) \in \Tt_m^{-1}F} \sup_{(\lambda_1,\ldots,\lambda_m) \in (T_{x_0}M)^m} \frac1m\sum_{i=1}^m \left\{\inp{\lambda_i}{mv_i} - \Psi_{x_0}(\lambda_i) - m^{-1/2}Cr^3|\lambda_i|^2\right\}
\\
= 
\inf_{v \in \Exp_{x_0}^{-1}F} \sup_{\lambda \in T_{x_0}M}  \left\{\inp{\lambda}{v} - \Psi_{x_0}(\lambda) - m^{-1/2}Cr^2|\lambda|^2\right\}.
\end{multline}

Since 
$$
\inf_{x \in F} I_M(x) = \inf_{v \in \Exp_{x_0}^{-1}F} \sup_{\lambda \in T_{x_0}M} \left\{\inp{\lambda}{v} - \Psi_{x_0}(\lambda)\right\}
$$
it suffices to show that
\begin{equation}\label{eq:limit_upper_bound}
\lim_{m\to\infty} \inf_{v \in \Exp_{x_0}^{-1}F} \sup_{\lambda \in T_{x_0}M}  \left\{\inp{\lambda}{v} - \Psi_{x_0}(\lambda) - m^{-1/2}Cr^3|\lambda|^2\right\}
=
\inf_{v \in \Exp_{x_0}^{-1}F} \sup_{\lambda \in T_{x_0}M} \left\{\inp{\lambda}{v} - \Psi_{x_0}(\lambda)\right\}.
\end{equation}

When we restrict the infimum to $v \in \overline{B(0,2r\EE(|Z|)} \cap \Exp_{x_0}^{-1}F$, this follows from compactness, together with the fact that
$$
\lim_{m\to\infty} \sup_{\lambda \in T_{x_0}M} \left\{\inp{\lambda}{v} - \Psi_{x_0}(\lambda) - m^{-1/2}Cr^3|\lambda|^2\right\} = \sup_{\lambda \in T_{x_0}M} \left\{\inp{\lambda}{v} - \Psi_{x_0}(\lambda)\right\}
$$
because the sequence is increasing in $m$.

For $v \notin \overline{B(0,2r\EE(|Z|)}$, it follows by showing that both sides of \eqref{eq:limit_upper_bound} are infinite. For this, first observe that since the support of of $\mu_{x_0}$ is contained in $\overline{B(0,r)}$, we have 
$$
\Psi_{x_0}(\lambda) \leq \EE(r|Z||\lambda|) = r|\lambda|\EE(|Z|). 
$$

This gives us that
$$
\sup_{\lambda \in T_{x_0}M} \left\{\inp{\lambda}{v} - \Psi_{x_0}(\lambda)\right\} \geq \sup_{\lambda \in T_{x_0}M} \left\{\inp{\lambda}{v} - r\EE(|Z|)|\lambda|\right\} = \infty
$$
for $|v| \geq r\EE(|Z|)$. 

On the other hand,
\begin{align*}
\sup_{\lambda \in T_{x_0}M}  \left\{\inp{\lambda}{v} - \Psi_{x_0}(\lambda) - m^{-1/2}Cr^2|\lambda|^2\right\}
&\geq
\sup_{\lambda \in T_{x_0}M}  \left\{\inp{\lambda}{v} - r|\lambda|\EE(|Z|) - m^{-1/2}Cr^3|\lambda|^2\right\}
\\
&=
\frac{(|v| - r\EE(|Z|))^2\sqrt m}{4Cr^3},
\end{align*}
which tends to infinity as $m\to\infty$, completing the proof.
\end{proof}

\subsection{Lower bound of the large deviation principle}

Similar to the large deviations upper bound, we deduce the lower bound for the large deviation principle for $\{\Ss_n^{\theta^n}\}_{n \in \NN}$ from from a suitable large deviations lower bound in $(T_{x_0}M)^m$. For the upper bound, we had some freedom in choosing how to parallelly transport vectors to eventually map $(T_{x_0}M)^m$ to $M$. Ultimately, we defined the map $\Tt_m$ as in \eqref{eq:tangent_to_M_upper} for this. Unfortunately, this map does not have strong enough continuity properties to transfer a large deviations lower bound from $(T_{x_0}M)^m$ to one for $\{\Ss_n^{\theta^n}\}_{n \in \NN}$. In particular, we require the continuity of the maps in a sense to be uniform in $m$. Therefore, we introduce an adapted version of this map in the next section.

\subsubsection{From $T_{x_0}M$ to $M$}

To prove the lower bound of the large deviation principle, for $x \in M$ and $v \in \Exp_{x_0}^{-1}x \subset T_{x_0}M$ we can focus on realizations of the random walk which stay close to the geodesic $\gamma_v(t) = \Exp_{x_0}(tv)$. We parallelly transport the random increments as much as possible along this geodesic, to limit deviations arising from curvature. 

More specifically, given $m \in \NN$, $v \in T_{x_0}M$, we define a map $\Tt_{v,m}:(T_{x_0}M)^m \to M$ as follows (see also \cite[Section 6.2]{Ver19}). We first discretize the geodesic $\gamma_v(t) = \Exp_{x_0}(tv)$ by setting $x_i := \gamma_v\left(\frac im\right)$, $i = 1,\ldots,m$. Next, we define points $y_i$, $i = 0,\ldots,m$ recursively. First set $y_0 = x_0$ and next suppose $y_k$ is defined. Define $\tilde v_k \in T_{y_k}M$ as $\tilde v_k = \tau_{x_ky_k}\tau_{\gamma_v;x_0x_k}v_k$. Contrary to the setting of the upper bound, here $\tau_{\gamma_v;x_0x_k}$ denotes parallel transport along the geodesic $\gamma_v$, and $\tau_{x_ky_k}$ is parallel transport along any shortest length geodesic connecting $x_k$ and $y_k$. Finally, we set
\begin{equation}\label{eq:tangent_to_M_lower}
\Tt_{v,m}(v_1,\ldots,v_m) := y_m.    
\end{equation}

From \cite[Section 6.2]{Ver19} we have the following, which shows that the continuity of the maps $\Tt_{v,m}$ is in a sense uniform in $m$.

\begin{lemma}\label{lem:continuity}
    Given $v \in T_{x_0}M$ and $\epsilon > 0$, there exists a $\delta > 0$ such that for all $m$ large enough we have
    $$
    \Tt_{v,m}(v_1,\ldots,v_m) \in B(\Exp_{x_0}v,\epsilon)
    $$
    whenever $(v_1,\ldots,v_m) \in B(v,\delta)^m$.
\end{lemma}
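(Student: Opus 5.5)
We would prove this by a discrete Gronwall argument that compares the perturbed points $y_k$ with the geodesic points $x_k = \gamma_v(k/m)$. We read the recursion in \eqref{eq:tangent_to_M_lower} as $y_{k+1} = \Exp_{y_k}\bigl(\tfrac1m \tau_{x_ky_k}\tau_{\gamma_v;x_0x_k}v_{k+1}\bigr)$, consistent with \cite[Section 6.2]{Ver19}. The unperturbed points satisfy $x_{k+1} = \Exp_{x_k}\bigl(\tfrac1m \tau_{\gamma_v;x_0x_k}v\bigr)$. Set $e_k := d(x_k,y_k)$, so $e_0 = 0$. The aim is a recursion of the form
$$
e_{k+1} \leq \Bigl(1 + \frac{C}{m}\Bigr)e_k + \frac{C\delta}{m},
$$
with $C$ independent of $m$ and $k$. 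Iterating it gives $e_m \leq C\delta e^{C}$, and choosing $\delta$ small then yields $y_m = \Tt_{v,m}(v_1,\ldots,v_m) \in B(\Exp_{x_0}v,\epsilon)$.

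To obtain the recursion, write $u_k := \tau_{\gamma_v;x_0x_k}v$ and $w_k := \tau_{\gamma_v;x_0x_k}v_{k+1}$. Since parallel transport is an isometry, $|w_k - u_k| < \delta$. The triangle inequality splits the step into two pieces:
$$
e_{k+1} \leq d\Bigl(\Exp_{y_k}\bigl(\tfrac1m\tau_{x_ky_k}w_k\bigr),\Exp_{y_k}\bigl(\tfrac1m\tau_{x_ky_k}u_k\bigr)\Bigr) + d\Bigl(\Exp_{y_k}\bigl(\tfrac1m\tau_{x_ky_k}u_k\bigr),\Exp_{x_k}\bigl(\tfrac1m u_k\bigr)\Bigr).
$$
The first term is at most $C\delta/m$, because $\Exp_y$ is uniformly Lipschitz on vectors of length at most $(|v|+1)/m$ for $y$ in a compact set. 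For the second term we need the comparison estimate
$$
d\bigl(\Exp_y(\tau_{xy}u),\Exp_x(u)\bigr) \leq d(x,y)\bigl(1 + C|u|\bigr),
$$
uniformly for $x,y$ in a compact set $K$, $d(x,y)$ below the injectivity radius $\iota_K$, and $|u|$ bounded. To prove it, let $c(s)$ be the shortest geodesic from $x$ to $y$ and consider the geodesic variation $s \mapsto \Exp_{c(s)}(t\,\tau_s u)$, where $\tau_s$ is parallel transport along $c$. Its variation field $J$ is a Jacobi field along $t \mapsto \Exp_{c(s)}(tu)$ with $J(0) = c'(s)$ and $J'(0) = 0$, because $\tau_s u$ is parallel along $c$. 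Smooth dependence of Jacobi fields on the initial data then gives $|J(1)| \leq |c'|(1 + C|u|^2)$ on $K$. Integrating over $s$ gives the comparison estimate, applied with $|u| = |u_k|/m = |v|/m$.

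We must also keep everything inside a fixed compact set, so that the constants do not depend on $m$. Take $K = \overline{B(x_0,|v|+1)}$. We argue by induction: as long as $e_j \leq \min(1,\iota_K/2)$ for all $j \leq k$, the points $x_j, y_j$ lie in $K$, the shortest geodesics between them are unique, and the recursion holds up to step $k$. Then $e_{k+1} \leq C\delta e^{C}$. Choosing $\delta$ with $C\delta e^{C} < \min(\epsilon, 1, \iota_K/2)$ closes the induction and proves the lemma for all $m$ large enough; "large enough" is needed only so that the steps $(|v|+\delta)/m$ stay below $\iota_K$. We expect the main obstacle to be making the Jacobi-field comparison estimate uniform, with a bound $1 + O(1/m)$ per step rather than merely $O(1)$, since only the former prevents the errors from accumulating over $m$ steps.
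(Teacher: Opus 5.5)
Your proof is correct. The paper gives no argument of its own for this lemma and simply imports it from \cite[Section 6.2]{Ver19}. Your scheme is the stepwise comparison underlying that cited result: you compare $y_k$ with $x_k=\gamma_v(k/m)$, and a uniform Lipschitz bound for $\Exp_{y_k}$ controls the perturbed direction (the $C\delta/m$ term). The Jacobi-field estimate $d(\Exp_y(\tau_{xy}u),\Exp_x u)\leq d(x,y)(1+C|u|^2)$ controls the change of base point (a factor $1+O(1/m)$, in fact $1+O(m^{-2})$), and a discrete Gronwall iteration inside a fixed compact set finishes the argument.

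Your reading $y_{k+1}=\Exp_{y_k}\bigl(\tfrac1m\tau_{x_ky_k}\tau_{\gamma_v;x_0x_k}v_{k+1}\bigr)$ is the right way to complete the paper's unfinished definition of $\Tt_{v,m}$. Without the factor $\tfrac1m$, the statement with inputs in $B(v,\delta)^m$ already fails in $\RR^d$ for $v\neq 0$.
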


\subsubsection{Proof of the large deviations lower bound}

Since we have adapted the map $\Tt_m$ to $\Tt_{v,m}$, we have to adapt the sums $Y_i^n$ as defined in \eqref{eq:Y_upper} accordingly. Let $v \in T_{x_0}M$ and set $\gamma_v(t) = \Exp_{x_0}(tv)$ as in the previous section. Now define for $i = 1,\ldots,m$ the random variables
\begin{equation}\label{eq:incr_rw_tangent}
\bar Y_i^n = \tau_{\gamma_v;x_0\gamma_v(\frac {i-1}m)}^{-1}\tau^{-1}_{\gamma_v(\frac {i-1}m)\Ss_{n_{i-1}}^{\theta^n}}\sum_{k = n_{i-1} + 1}^{n_i} \theta_k^n\tau^{-1}_{\Ss_{n_{i-1}}^{\theta^n}\Ss_{k-1}^{\theta^n}}X_k^n\in T_{x_0}M
\end{equation}

For the same reasons as for $Y_i^n$, $\bar Y_i^n$ is a weighted sum of independent random variables with distribution $\mu_{x_0}$. As a consequence, Corollary \ref{cor:Varadhan_limit_mgf} implies that each of the $\bar Y_i^n$ satisfies a large deviation principle. The continuity property of $\Tt_{v,m}$ as in Lemma \ref{lem:continuity}, especially the uniformity in $m$, then allows us to transfer the corresponding lower bound to the lower bound for the large deviation principle for $\{\Ss_n^{\theta^n}\}_n$.


\begin{proposition}\label{prop:lower_bound_LDP_theorem}
    Let the assumptions of Theorem \ref{thm:weighted_GRW_LDP} be satisfied. Then for $\sigma$-almost every $\theta$ and every $G \subset M$ open we have
    $$
    \limsup_{n\to\infty} \frac1n\log \PP\left(\Ss_n^{\theta^n} \in G\right) \geq -\inf_{x\in G} I_M(x),
    $$
    where
    $$
    I_M(x) = \inf\{\Psi_{x_0}^*(v)|v \in \Exp_{x_0}^{-1}x\}.
    $$
\end{proposition}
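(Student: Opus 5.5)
By the usual reduction it suffices to work locally. Fix $x \in G$ and $v \in \Exp_{x_0}^{-1}x$ with $\Psi_{x_0}^*(v) < \infty$; for every $\epsilon>0$ with $B(x,\epsilon)\subset G$ we will show
$$
\liminf_{n\to\infty}\frac1n\log\PP\left(\Ss_n^{\theta^n}\in B(x,\epsilon)\right) \geq -\Psi_{x_0}^*(v).
$$
Taking the supremum over such $v$ and $x$ then gives the claim, with $\liminf$ and hence also $\limsup$. The null set of $\theta$ must not depend on $v$ or $\epsilon$. To arrange this, we use only countably many applications of Corollary \ref{cor:Varadhan_limit_mgf} and Theorem \ref{thm:GKR_LDP_vector}: one for each $m\in\NN$ and each block index $i\le m$, which we may do since the almost-sure statement there is uniform in $\lambda$ once the LDP holds.

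First, Lemma \ref{lem:continuity} provides a $\delta>0$ such that, for $m$ large, $\Tt_{v,m}(w_1,\ldots,w_m)\in B(x,\epsilon)$ whenever every $w_i\in B(v,\delta)$. Next we compare the realized walk with $\Tt_{v,m}$ applied to the block vectors $w_i := \frac{m}{\sqrt n}\bar Y_i^n$. Here the factor $m$ appears because each block carries $\lfloor n/m\rfloor$ steps, while $\Tt_{v,m}$ follows $w_i$ for time $1/m$.

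The analogue of Proposition \ref{prop:RW_to_tangent_space} applied in each block shows two things. Within block $i$, the actual displacement $\tilde v^{n,m,i}_{\lfloor n/m\rfloor}$ differs from the transported partial sum by $O(\frac1n\sum_{k\in\text{block}}(\theta_k^n)^2 + m^{-3/2})$. In addition, transporting via $\gamma_v(\frac{i-1}m)$ instead of along the random path produces an error controlled, as in \cite[Section 6.2]{Ver19}, by curvature times $d(\Ss_{n_{i-1}},\gamma_v(\frac{i-1}m))\cdot m^{-1}$. Summing over the blocks, and using a discrete Gronwall-type argument as in \cite{Ver19}, we get that on the event $\{w_i\in B(v,\delta)\ \forall i\}$ the endpoint $\Ss_n^{\theta^n}$ lies within $o(1)+O(m^{-1/2})+O(\delta)$ of $\Tt_{v,m}(w_1,\ldots,w_m)$. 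Shrinking $\delta$ and taking $m$ large then gives $\Ss_n^{\theta^n}\in B(x,2\epsilon)$ on that event.

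It remains to bound from below the probability of the event $\{w_i\in B(v,\delta)\ \forall i\}$. The difficulty is that $\bar Y_i^n$ is not exactly independent of the past, since the transport depends on $\Ss_{n_{i-1}}$. However, conditional on $\mathcal F_{n_{i-1}}$, the increments $\tau^{-1}\cdots X_k^n$ are i.i.d.\ $\mu_{x_0}$ by parallel-transport invariance, because the transport maps are $\mathcal F_{k-1}$-measurable. Hence $\bar Y_1^n,\ldots,\bar Y_m^n$ are independent, and each is distributed as $\sqrt n\,W^{\theta'}_{\lfloor n/m\rfloor}$ for the shifted weight block. By permutation invariance of $\sigma$ and Corollary \ref{cor:Varadhan_limit_mgf}, the normalized log-MGF of $\frac{1}{\sqrt n}\bar Y_i^n$ converges to $\frac1m\Psi_{x_0}(\lambda)$, which is finite and differentiable since the increments are bounded. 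The Gärtner--Ellis lower bound \cite[Theorem 2.3.6]{DZ98} then gives
$$
\liminf_n\frac1n\log\PP\left(\tfrac{m}{\sqrt n}\bar Y_i^n\in B(v,\delta)\right)\ge -\tfrac1m\Psi_{x_0}^*(v),
$$
because the rate function of $\frac m{\sqrt n}\bar Y^n_i$ is $\frac1m\Psi^*_{x_0}(v)$. Multiplying over the $m$ independent blocks yields the bound $-\Psi_{x_0}^*(v)$.

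The main obstacle is the comparison step, where the error must be made uniform in $m$. If the Gronwall bound fails to hold uniformly, the fallback is to restrict to the smaller event $\{w_i\in B(v,\delta/m)\}$. This is not free, since it changes the rate, so it is a genuine weak point. We would instead follow \cite[Proposition 6.11]{Ver19} verbatim, replacing the i.i.d.\ block MGFs with Corollary \ref{cor:Varadhan_limit_mgf}.
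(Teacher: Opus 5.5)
Everything on the probabilistic side of your argument is fine, and in places more careful than the paper. This covers the conditional-independence argument for the blocks, the scaling $w_i=\frac{m}{\sqrt n}\bar Y_i^n$ with block rate $\frac1m\Psi_{x_0}^*$, the G\"artner--Ellis lower bound, and the countable family of null sets.

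\textbf{The gap is the geometric comparison.} You assert that on $\{w_i\in B(v,\delta)\ \forall i\}$ the endpoint $\Ss_n^{\theta^n}$ lies within $o(1)+O(m^{-1/2})+O(\delta)$ of $\Tt_{v,m}(w_1,\ldots,w_m)$. This rests on an unspecified Gronwall argument, which you yourself leave open. The error mechanism you describe is also not the relevant one. $\bar Y_i^n$ and $\Tt_{v,m}$ use the \emph{same} transport: along $\gamma_v$, then along a shortest geodesic to the current base point. So there is no ``$\gamma_v$ versus random path'' error. What would actually need control is this: $\Tt_{v,m}$, fed with $w$, passes through base points $y_{i-1}\neq \Ss_{n_{i-1}}^{\theta^n}$. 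The mismatch between $\tau_{x_{i-1}y_{i-1}}$ and $\tau_{x_{i-1}\Ss_{n_{i-1}}^{\theta^n}}$ then propagates over $m$ blocks. Your assessment of the fallback is also off. For fixed $m$, a radius $\delta/m$ gives the same G\"artner--Ellis bound, since $n\to\infty$ is taken first. But it does nothing about the $O(m^{-1/2})$ discrepancy, so it would not help anyway.

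\textbf{The missing idea, which is how the paper proceeds, is that $\Tt_{v,m}$ is built so that no comparison in $M$ is needed at all.} Pull back the actual block displacement $\tilde v^{n,m,i}_{\lfloor n/m\rfloor}$ by exactly the transports appearing in $\bar Y_i^n$, giving $\bar v^{n,m,i}\in T_{x_0}M$. Argue by induction on $i$: the point $y_{i-1}$ built by $\Tt_{v,m}$ is $\Ss_{n_{i-1}}^{\theta^n}$, and the forward transports in $\Tt_{v,m}$ undo the pullback. With your time-$1/m$ convention this gives the exact identity $\Tt_{v,m}(m\bar v^{n,m,1},\ldots,m\bar v^{n,m,m})=\Ss_n^{\theta^n}$.

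So apply Lemma \ref{lem:continuity} to the vectors $m\bar v^{n,m,i}$, and compare them with $w_i$ inside $T_{x_0}M$. Both are images, under the same isometry, of $m\tilde v^{n,m,i}_{\lfloor n/m\rfloor}$ and $\frac{m}{\sqrt n}\sum_k\theta_k^n\tau^{-1}_{\Ss_{n_{i-1}}^{\theta^n}\Ss_{k-1}^{\theta^n}}X_k^n$ respectively. Proposition \ref{prop:RW_to_tangent_space}, applied on the block, then gives $|m\bar v^{n,m,i}-w_i|\le Cm/n+Cr^3m^{-1/2}<\delta/2$ once $m$ is large and then $n$ is large. Hence $\{w\in B(v,\delta/2)^m\}\subset\{\Ss_n^{\theta^n}\in B(x,\epsilon)\}$, and all the uniformity in $m$ is carried by Lemma \ref{lem:continuity}. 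A Gronwall bound with per-block Lipschitz factor $1+C/m$ could probably be made to work along your route, but it is unnecessary.
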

\begin{proof}
    It suffices to show that for every $x \in M$ and $\epsilon > 0$ we have
    $$
    \limsup_{n\to\infty} \frac1n\log \PP\left(\Ss_n^{\theta^n} \in B(x,\epsilon)\right) \geq - I_M(x).
    $$
    By definition of $I_M(x)$, it is actually enough to show that
    $$
    \limsup_{n\to\infty} \frac1n\log \PP\left(\Ss_n^{\theta^n} \in B(x,\epsilon)\right) \geq - \Psi^*(v)
    $$
    for all $v \in \Exp_{x_0}^{-1}x$.
    
    Fix such $v$. Write $\gamma_v(t) = \Exp_{x_0}(tv)$ and let $m$ be large enough such that we can uniquely define
    $$
    \tilde v_k^{n,m,i} \in \Exp_{\Ss_{n_{i-1}}^{\theta^n}}^{-1}\left(\Ss_{n_{i-1}+k}^{\theta^n}\right) \subset T_{\Ss_{n_{i-1}}^{\theta^n}}M
    $$
    of minimal length (as in \eqref{eq:pullback_vector_piece}) and set
    $$
    \bar v_k^{n,m,i} = \tau_{\gamma_v;x_0\gamma_v(\frac {i-1}m)}^{-1}\tau^{-1}_{\gamma_v(\frac {i-1}m)\Ss_{n_{i-1}}^{\theta^n}}\tilde v_k^{n,m,i} \in T_{x_0}M. 
    $$
    By construction
    $$
    \Tt_{v,m}(\bar v_{\lfloor m^{-1}n\rfloor}^{n,m,1},\ldots,\bar v_{\lfloor m^{-1}n\rfloor}^{n,m,m}) = \Ss_n^{\theta^n},
    $$
    where $\Tt_{v,m}$ is as in \eqref{eq:tangent_to_M_lower}. By Lemma \ref{lem:continuity} there exists a $\delta > 0$ such that for all $m$ large enough we can estimate
    $$
    \PP\left(\Ss_n^{\theta^n} \in B(x,\epsilon)\right) \geq \PP\left(\left(\bar v_{\lfloor m^{-1}n\rfloor}^{n,m,1},\ldots,\bar v_{\lfloor m^{-1}n\rfloor}^{n,m,m}\right) \in B(v,\delta)^m\right).
    $$

    We compare the latter to $(\bar Y_1^n,\ldots,\bar Y_n^n)$. Proposition \ref{prop:RW_to_tangent_space} and \eqref{eq:CS_coef} imply that
    \begin{align*}
        |\bar v_{\lfloor m^{-1}n\rfloor}^{n,m,i} - \bar Y_i^n|
        &=
        \left|v_{\lfloor m^{-1}n\rfloor}^{n,m,i} - \sum_{k = n_{i-1} + 1}^{n_i} \theta_k^n\tau^{-1}_{\Ss_{n_{i-1}}^{\theta^n}\Ss_{k-1}^{\theta^n}}X_k^n\right|
        \\
        &\leq
        C\frac1n\sum_{k={n_{i-1}+1}}^{n_i} (\theta_k^n)^2 + Cr^3\frac{1}{m^{3/2}}
        \\
        &\leq
        C\frac1n\frac{r}{m^{1/2}} + Cr^3\frac{1}{m^{3/2}}.
    \end{align*}
    As a consequence, we can take $m$ large enough so that
    $$
    |\bar v_{\lfloor m^{-1}n\rfloor}^{n,m,i} - \bar Y_i^n| < \frac{\delta}{2}.
    $$
    From this it follows we can further estimate
    $$
    \PP\left((\bar v_{\lfloor m^{-1}n\rfloor}^{n,m,1},\ldots,\bar v_{\lfloor m^{-1}n\rfloor}^{n,m,m}) \in B(v,\delta)^m\right) \geq \PP\left((\bar Y_1^n,\ldots,\bar Y_m^n) \in B(v,\delta/2)^m\right).
    $$
    Since the increments of the random walk $\Ss_n^{\theta^n}$ are independent, so are the $\bar Y_i^n$. Therefore,
    $$
    \frac1n\log \PP\left((\bar Y_1^n,\ldots,\bar Y_m^n) \in B(v,\delta/2)^m\right) = \frac{1}{n}\sum_{i=1}^m\log \PP\left(\bar Y_i^n \in B(v,\delta/2)\right)
    $$
    Since moreover the distributions of the increments of $\Ss_n^{\theta^n}$ are invariant under parallel transport, it follows that $\bar Y_i^n$ is a sum of independent random variables with distribution $\mu_{x_0}$. As a consequence, it follows from Corollary \ref{cor:Varadhan_limit_mgf} and the Gartner-Ellis theorem (\cite[Theorem 2.3.6]{DZ98}) that $Y_i^n$ satisfies a large deviation principle with rate function $\frac1m\Psi_{x_0}^*$. This implies that
    $$
    \liminf_{n\to\infty} \frac 1n \sum_{i=1}^m\log \PP\left(\bar Y_i^n \in B(v,\delta/2)\right) \geq - \sum_{i=1}^m \frac1m\Psi_{x_0}^*(v) = -\Psi_{x_0}^*(v),
    $$
    which completes the proof.
    \end{proof}

\printbibliography

\end{document}